\newtheorem{remark}[theorem]{Remark}
\newcommand{\E}{\mathbb{E}}
\newcommand{\R}{\mathbb{R}}
\newcommand{\N}{\mathbb{N}}
\newcommand{\norm}[1]{\left\|#1\right\|}
\newcommand{\Prob}{{\mathbb P}}
\newcommand{\by}{{\bf y}}
\newcommand{\byd}{{\bf y}^\delta}
\newcommand{\be}{{\boldsymbol \epsilon}}
\newcommand{\fmnd}{f_{m,N}}
\newcommand{\est}{g_{m,N}^{R}}
\newcommand{\cone}{D_1}
\newcommand{\ctwo}{D_2}
\newcommand{\cthree}{D_3}
\newcommand{\dmeas}{\nu}
\newcommand{\jmeas}{\mu}
\newcommand{\likemeas}{\rho}
\newcommand{\bdmeas}{{\boldsymbol \dmeas}}
\DeclareMathOperator*{\argmin}{\rm arg\, min}
\newcommand{\dom}{{\mathcal D}}
\def\tr{{\rm Tr}}
\def\HS{{\rm HS}}
\title{Least squares approximations in linear statistical inverse learning problems}
\author{
T. Helin\footnotemark[1]
}
\begin{document}

\maketitle

\renewcommand{\thefootnote}{\fnsymbol{footnote}}
\footnotetext[1]{LUT University, School of Engineering Science, P.O.~Box 20, FI-53851 Lappeenranta, Finland. The work of TH was supported by the the Academy of Finland (decision 326961). The author is indebted to Vesa Kaarnioja who provided valuable input to the project.}

\begin{abstract}
Statistical inverse learning aims at recovering an unknown function $f$ from randomly scattered and possibly noisy point evaluations of another function $g$, connected to $f$ via an ill-posed mathematical model. 
In this paper we blend statistical inverse learning theory with the classical regularization strategy of applying finite-dimensional projections.
Our key finding is that coupling the number of random point evaluations with the choice of projection dimension, one can derive probabilistic convergence rates for the reconstruction error of the maximum likelihood (ML) estimator. Convergence rates in expectation are derived with a ML estimator complemented with a norm-based cut-off operation. Moreover, we prove that the obtained rates are minimax optimal.
\end{abstract}

\begin{keywords}
Statistical learning, inverse problems, least squares approximations, minimax optimality.
\end{keywords}

\begin{AMS}
62G08, 62G20, 65J20, 68Q32.
\end{AMS}

\section{Introduction}

Statistical inverse learning aims at recovering an unknown function $f$ from randomly scattered and possibly noisy point evaluations of another function $g$, connected to $f$ via an ill-posed mathematical model. 
Statistical learning has a long tradition in inverse problems going back to the works \cite{o1990convergence, bissantz2004consistency} and has recently gained increasing attention in literature. 
A crucial component in addressing statistical inverse learning is the choice of regularization scheme \cite{engl1996regularization} needed to stabilize the inverse problem.

The success of a given inverse learning method is often described in (probabilistic) terms of the reconstruction error and, in particular, its convergence speed with respect to increasing number of point evaluations. To this effect, we highlight the recent work by Blanchard and M\"ucke \cite{blanchard2018optimal}, where minimax optimal convergence rates are derived for the general spectral regularization approach in Hilbert spaces under certain classes of sampling measure. The spectral approach has since been extended to non-linear inverse problems \cite{rastogi2019convergence}, adaptive parameter choice rules \cite{lu2020balancing} and convex regularization penalties \cite{bubba2021convex}.

In this paper we blend statistical inverse learning theory with the approach of regularization by projection. This approach is based on the idea that projecting either the domain or the range of the forward operator to a finite-dimensional subspace stabilizes the inverse problem \cite{engl1996regularization}. As one key motivation for studying projection-based strategies, common iterative algorithms such as the Krylov space methods can be interpreted as projection-based regularization methods for inverse problems. 

It is well-known that convergence rates for general inverse problems cannot be shown without further assumptions or applying projection in \emph{both} the domain and the range of the forward operator \cite{seidman1980nonconvergence}. 
Our key finding is that the statistical learning framework with finite number of random data point evaluations coupled with general finite-dimensional projection in operator domain provides similar remedy to the convergence study. When coupling the choice of projection dimension
with the number of random point evaluations, we are able to derive convergence rates for the probabilistic reconstruction error of the maximum likelihood (ML) estimator in the range of the projection. Moreover, convergence rates for the expected reconstruction error are derived for the ML estimator complemented with a norm-based cut-off. To complete the picture, we prove that the attained rates are minimax optimal.

In terms of frequentist approach to statistical inverse problems, our result is related to work by Mathe and Pereverzev \cite{mathe2001optimal}, who provide convergence rates for optimal discretization schemes in linear models. In the spirit of statistical learning, the projection applied in the range in \cite{mathe2001optimal} could be interpreted as taking dual pairings in a reproducing kernel Hilbert space with the corresponding kernel function thus giving rise to point evaluation data. However, the design in \cite{mathe2001optimal} is fixed while in learning context (including this paper) it is considered to be randomly generated by an unknown distribution. 

Let us describe this phenomena more rigorously. Suppose $\dom \subset \R^d$ is a Borel set and $\dmeas$ is a probability measure on $\dom$, which we will occasionally refer to as the \emph{design measure}. Let ${\mathcal H}$ be a separable real infinite-dimensional Hilbert space. We investigate the measurement model
\begin{equation}
\label{eq:main_model}
h=Af,
\end{equation}
where $h\in L^2(\dom,\dmeas)$ is the datum, $A\!:{\mathcal H}\to H_k \subset L^2(\dom,\dmeas)$ is a compact, \emph{one-to-one} linear operator, and $f\in {\mathcal H}$ is the unknown function.
The range $H_k$ is assumed to be a reproducing kernel Hilbert space (RKHS) induced by the positive semidefinite kernel $k : \dom\times \dom \to \R$. Moreover, without loss of generality we assume $\norm{A}_{{\mathcal L}(H, H_k)}\leq 1$.

Let $(x_n)_{n=1}^N\subset \dom$ be i.i.d.~with respect to the probability measure $\dmeas$. We are interested in finding an approximation for the ground truth $f^\dagger$ based on an ensemble of noisy observations $\byd = (y_n^\delta)_{n=1}^N \in \R^N$ such that
\begin{equation}
\label{eq:meas_model}
y_n^\delta=h^\dagger(x_n)+ \delta \epsilon_n,\quad n =1,\ldots,N,
\end{equation}
where $h^\dagger = Af^\dagger$ stands for the noise-free data corresponding to the ground-truth value $f^\dagger \in {\mathcal H}$, $\delta>0$ describes noise level and $\epsilon_n \sim {\mathcal N}(0,1)$ are independent and normally distributed. By denoting $X = \{x_n\}_{n=1}^N \subset \dom$, we reformulate \eqref{eq:meas_model} into a vectorized form 
\begin{equation}
	\label{eq:meas_model_vec}
	\by = S_X A f^\dagger + \delta \be \in \R^N,
\end{equation}
where $S_X : H_k \to \R^N$ is the evaluation operator at point set $X$ and $\be = (\epsilon_n)_{n=1}^N \in \R^N$. 

To study the limit of increasing $N$, we denote
\begin{equation*}
	A_\dmeas = \iota A : {\mathcal H} \to L^2(\dom, \dmeas),
\end{equation*}
where $\iota : H_k \to L^2(\dom, \dmeas)$ is the canonical injection map, and introduce the corresponding normal operator
\begin{equation}
	\label{eq:Brho}
	B_\dmeas = A_\dmeas^* A_\dmeas : {\mathcal H} \to {\mathcal H}.
\end{equation}
For more properties of the operator $B_\dmeas$, see \cite[Prop. 19]{de2006discretization}.

Below, a key structure is the underlying discretization scheme of ${\mathcal H}$, which will stay fixed throughout the paper.
\begin{definition}
\label{def:admissible}
Let $V_m$, $m\geq 1$, be finite-dimensional subspaces of ${\mathcal H}$ such that $\dim V_m=m$ and let $P_m : {\mathcal H} \to V_m \subset {\mathcal H}$ be an orthogonal projection.
We call a sequence $\{V_m\}_{m=1}^\infty$ \emph{admissible} subspaces if $V_m \subset V_{m+1}$ for all $m\in \N$ and $\overline{\cup_{m=1}^\infty V_m} = {\mathcal H}$.
\end{definition}
The main purpose of definition \ref{def:admissible} is to make sure that any $f \in {\mathcal H}$ can be approximated for a given accuracy in some $V_m$ for $m$ large enough. 
We note that the nested structure of the subspaces is only utilized in the proof of the minimax optimality result below.

We seek an approximate solution for the ground truth $f^\dagger \in {\mathcal H}$ by defining the ML estimator as the minimum-norm least squares solution to $S_X A f = \byd$. More precisely, we set
\begin{equation}
	\label{eq:def_fmnd}
	\fmnd = \argmin_{f\in V_m} \left\{\norm{f}_{\mathcal H} \; | \; f \; \text{minimizes}\; \norm{S_X A f - \byd}_N \right\} \quad \text{a.s.},
\end{equation}
where $\|\cdot\|_N$ stands for the norm induced by the empirical inner product $\langle {\bf x}, {\bf z}\rangle_N = \frac 1N \sum_{n=1}^N x_n z_n$ with ${\bf x}, {\bf z} \in \R^N$. 
The estimator $\fmnd$ is defined up to a zero-measurable set as it is always unique and can be represented by a linear mapping applied to $\byd$ as we will see later.

In statistical inverse problems, it is well-known that further restrictions regarding the ground truth $f^\dagger$, so-called source conditions, are needed in order to derive concentration rates for regularized estimators \cite{cavalier2008nonparametric}.
Classical source conditions often imply certain smoothness of $f^\dagger$ via the mapping properties of $A$. Here, we can impose more explicit approximation conditions to $f^\dagger$ by connecting the source set $\Theta$ to the approximation rates obtained in subspaces $V_m$ by defining
\begin{equation}
	\label{eq:source}
	\Theta(s,R) = \{f \in {\mathcal H} \; | \;  \norm{(I-P_m) f}_{{\mathcal H}} \leq R(m+1)^{-s}\; \text{for all}\;  m\geq 0\} \subset {\mathcal H},
\end{equation}
where $s, R > 0$ and $V_m \subset {\mathcal H}$, $m\geq 1$, are admissible subspaces. Above, we use convention $P_0 = 0$. For example, finite-element approximations in Sobolev space ${\mathcal H} = H^1$ are typically bounded by some higher order Sobolev norm $H^p$, $p>1$, of the function and a power of the mesh size \cite{reddy2019introduction}. Here, this would correspond to $\Theta(s,R)$
coinciding with an $H^p$-Sobolev ball with parameter $s$ dependent on $p$ and the dimension of the domain.

To specify assumptions on the design measure let ${\mathcal P}(\dom)$ denote all probability measures on domain $\dom \subset \R^d$. We introduce the following parametrized subset of design measures
\begin{eqnarray}
	\label{eq:Pids}
	& {\mathcal P}^{>}(t, C) & =  \left\{\dmeas \in {\mathcal P}(\dom) \; \big| \; \lambda_{min}(P_m B_\dmeas P_m) \geq C m^{-t} \; \text{for all}\;  m\in\N\right\} \nonumber\\
	& {\mathcal P}^{<}(t, C) & =  \left\{\dmeas \in {\mathcal P}(\dom) \; \big| \; \lambda_{min}(P_m B_\dmeas P_m) \leq C m^{-t} \; \text{for all}\;  m\in\N\right\} 	
	\quad \text{and} \\
		&  {\mathcal P}^\times(C) & = \left\{\dmeas \in {\mathcal P}(\dom) \; \big| \; \norm{(P_m B_\dmeas P_m)^+ B_\dmeas (I-P_m)} \leq C\; \text{for all}\;  m\in\N\right\},\nonumber
\end{eqnarray}
where $\lambda_{min}(P_m B_\dmeas P_m)$ stands for the smallest \emph{non-zero} eigenvalue, i.e., the smallest eigenvalue of the finite-dimensional restriction $P_m B_\dmeas|_{V_m} : V_m \to V_m$. Also, recall that $\lambda_{min}(P_m B_\dmeas P_m)$ is always positive as $P_m B_\dmeas P_m$ is strictly positive-definite on $V_m$ for any $m\in\N$.

Restricting $\nu$ to the subsets defined in \eqref{eq:Pids} quantifies the ill-posedness of the statistical inverse problems. Good intuition is perhaps obtained by recalling that $B_\dmeas$ corresponds to the limit of infinite observations. For example, the set ${\mathcal P}^{>}$ specifies the worst instability of this limiting problem all across the subspaces $V_m$. We assume first $\dmeas \in {\mathcal P}^{>} \cap {\mathcal P}^\times$ to derive an \emph{upper bound} with given parameter choice rules. Then, a \emph{lower bound} to concentration is given in  ${\mathcal P}^{<} \cap {\mathcal P}^\times$, i.e., in a set restricting the stability. We only consider polynomial decay rates, i.e., mildly ill-posed problems, for convenience.

Next the probabilistic reconstruction error is characterized by the following theorem.

\begin{theorem}
\label{thm:main_result_prob}
Let $\{V_m\}_{m=1}^\infty$ be a sequence of admissible subspaces, $B_\nu$ is a Hilbert--Schmidt operator and 
suppose $\dmeas \in {\mathcal P}^{>}(t,\cone) \cap {\mathcal P}^\times(\ctwo)$ and $f^\dagger \in \Theta(s,R_0)$
for some constants $s,t,R_0, \cone, \ctwo>0$. Moreover, we assume that $\fmnd$ is the ML estimator defined by identity \eqref{eq:def_fmnd}.
Let $0< \eta < 1$ satisfy
\begin{equation}
	\label{eq:probability_interval}
	\log\left(\frac 8\eta\right) \leq \frac{\sqrt N}{12} \lambda_{\min}(P_mB_{\dmeas}P_m).
\end{equation}
There exists a constant $C$ depending on $D_j$, $j=1,2$ such that
\begin{equation}
	\label{eq:recon_error_prop}
	\norm{\fmnd - f^\dagger}_{{\mathcal H}} \leq C\left[R_0 m^{-s} + \log\left(\frac 8\eta\right) \cdot \delta\left(\frac{m^t}{N} + \frac{m^{\frac{t+1}2}}{\sqrt N}\right)\right]
\end{equation}
with probability greater than $1-\eta$. 
\end{theorem}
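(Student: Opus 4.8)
The plan is to start from the linear representation of the estimator and then run the usual bias/variance split, the novelty being that the "regularization parameter" is the least non-zero eigenvalue $\lambda_{\min}(P_mB_\dmeas P_m)$. Restricting to $V_m$, let $T_{m,N}=P_m(S_XA)^*(S_XA)|_{V_m}$ be the empirical normal operator, the adjoint taken with respect to $\langle\cdot,\cdot\rangle_N$. The normal equations for \eqref{eq:def_fmnd} give $\fmnd=T_{m,N}^+P_m(S_XA)^*\byd$, and the reproducing property yields $P_m(S_XA)^*(S_XA)P_m=P_mA^*T_XAP_m$ and $P_m(S_XA)^*\be=P_mA^*\zeta_N$, where $T_X=\frac1N\sum_n\langle\cdot,k_{x_n}\rangle_{H_k}k_{x_n}$ and $\zeta_N=\frac1N\sum_n\epsilon_nk_{x_n}$ are the empirical counterparts of $\iota^*\iota$ (whose $A$-conjugate is $B_\dmeas$) and of $0$. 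Inserting $\byd=S_XAf^\dagger+\delta\be$ and splitting $f^\dagger=P_mf^\dagger+(I-P_m)f^\dagger$ gives
\begin{equation*}
\fmnd-f^\dagger=\underbrace{(T_{m,N}^+T_{m,N}-I)P_mf^\dagger}_{\mathrm{(i)}}-\underbrace{(I-P_m)f^\dagger}_{\mathrm{(ii)}}+\underbrace{T_{m,N}^+P_mA^*T_XA(I-P_m)f^\dagger}_{\mathrm{(iii)}}+\delta\underbrace{T_{m,N}^+P_mA^*\zeta_N}_{\mathrm{(iv)}}.
\end{equation*}
Term $\mathrm{(ii)}$ is immediately controlled by $f^\dagger\in\Theta(s,R_0)$: $\norm{\mathrm{(ii)}}_{\mathcal H}\le R_0(m+1)^{-s}\le R_0m^{-s}$.

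The crux is a concentration estimate for $T_{m,N}$ on $V_m$. It is an average of $N$ i.i.d.\ self-adjoint rank-one operators on $V_m$ with mean $P_mB_\dmeas P_m$, each of operator norm bounded by a constant (using $\norm{A}\le 1$ and the RKHS bound on the kernel) and with controlled variance; a dimension-free operator Bernstein inequality in the spirit of \cite{blanchard2018optimal} — whose effective-dimension form is where the Hilbert--Schmidt hypothesis on $B_\dmeas$ enters — then shows that under \eqref{eq:probability_interval} the event
\begin{equation*}
\Omega:=\left\{\norm{T_{m,N}-P_mB_\dmeas P_m}_{{\mathcal L}(V_m)}\le\tfrac12\lambda_{\min}(P_mB_\dmeas P_m)\right\}
\end{equation*}
holds with probability close to $1$. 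The precise scaling $\log(8/\eta)\lesssim\sqrt N\,\lambda_{\min}$ in \eqref{eq:probability_interval} is exactly what makes this possible, and it simultaneously forces the auxiliary factor $m^t\sqrt{\log(8/\eta)/N}$ to be bounded by a constant multiple of $\cone$, which I use repeatedly below. On $\Omega$ the operator $T_{m,N}$ is invertible on $V_m$, so $T_{m,N}^+T_{m,N}=I_{V_m}$ and term $\mathrm{(i)}$ vanishes; moreover $\lambda_{\min}(T_{m,N})\ge\frac12\cone m^{-t}$ (using $\dmeas\in{\mathcal P}^{>}(t,\cone)$), hence $\norm{T_{m,N}^{-1}}\le2\cone^{-1}m^t$ and $\norm{T_{m,N}^{-1}P_mB_\dmeas P_m}\le2$.

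For the discretization bias $\mathrm{(iii)}$ I work on $\Omega$ and add and subtract the population operator, writing it as $\bigl[T_{m,N}^{-1}(P_mB_\dmeas P_m)\bigr]\bigl[(P_mB_\dmeas P_m)^{-1}P_mB_\dmeas(I-P_m)\bigr]f^\dagger+T_{m,N}^{-1}P_mA^*(T_X-\iota^*\iota)A(I-P_m)f^\dagger$. The first bracketed factor has norm $\le2$ on $\Omega$ and the second is $\le\ctwo$ since $\dmeas\in{\mathcal P}^\times(\ctwo)$, so together with $\norm{(I-P_m)f^\dagger}\le R_0m^{-s}$ the first summand is $\le2\ctwo R_0m^{-s}$; for the discrepancy summand a second, dimension-free Bernstein step gives $\norm{T_X-\iota^*\iota}\lesssim\log(8/\eta)/N+\sqrt{\log(8/\eta)/N}$ with high probability, and multiplying by $\norm{T_{m,N}^{-1}}\lesssim\cone^{-1}m^t$ and $R_0m^{-s}$ and invoking $m^t\sqrt{\log(8/\eta)/N}\lesssim\cone$ keeps it $\lesssim R_0m^{-s}$. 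For the stochastic term $\mathrm{(iv)}$ I condition on $X$ (on $\Omega$), factor $T_{m,N}^{-1}=T_{m,N}^{-1/2}T_{m,N}^{-1/2}$, and use the identity $\norm{T_{m,N}^{-1/2}P_mA^*\zeta_N}_{\mathcal H}^2=\langle\Pi_{m,N}\be,\be\rangle_N$, where $\Pi_{m,N}=(S_XAP_m)T_{m,N}^{-1}(S_XAP_m)^*$ is the $\langle\cdot,\cdot\rangle_N$-orthogonal projection onto $\mathrm{range}(S_XAP_m)$, a subspace of dimension at most $m$. Hence $\norm{\mathrm{(iv)}}_{\mathcal H}\le\sqrt{2\cone^{-1}}\,m^{t/2}\norm{\Pi_{m,N}\be}_N$, and since $\be\sim{\mathcal N}(0,I_N)$ the scalar $N\norm{\Pi_{m,N}\be}_N^2$ is $\chi^2$ with at most $m$ degrees of freedom, so a standard $\chi^2$ tail bound gives $N\norm{\Pi_{m,N}\be}_N^2\lesssim m+\log(8/\eta)$ with high probability. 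Combining, $\delta\norm{\mathrm{(iv)}}_{\mathcal H}\lesssim\delta\log(8/\eta)\bigl(m^{(t+1)/2}/\sqrt N+m^t/N\bigr)$, where the last summand is the lower-order correction absorbing the sub-exponential tail contributions.

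To finish I intersect the (boundedly many) favourable events so that the total failure probability is at most $\eta$ — this is where the "$8$" in $\log(8/\eta)$ comes from — add the four bounds, and collect constants into a single $C=C(\cone,\ctwo)$, which yields \eqref{eq:recon_error_prop}. The main obstacle is the operator concentration giving $\Omega$: obtaining both invertibility of $T_{m,N}$ on $V_m$ and a relative spectral bound, with a failure probability matching precisely the regime \eqref{eq:probability_interval}, is the delicate step and the one that consumes the Hilbert--Schmidt hypothesis on $B_\dmeas$. A secondary, purely bookkeeping difficulty is to verify at each replacement of an empirical operator ($T_{m,N}$, $T_X$) by its population limit that the incurred error is genuinely of lower order than the target rate — which, again, is exactly what \eqref{eq:probability_interval} guarantees through $m^t\sqrt{\log(8/\eta)/N}\lesssim\cone$.
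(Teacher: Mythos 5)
Your proposal is correct and its skeleton coincides with the paper's: the pseudoinverse representation $\fmnd=(P_mB_XP_m)^+(S_XAP_m)^*\byd$ from the normal equations, the split into an approximation part and a noise part, the concentration event $\norm{B_X-B_\dmeas}\leq\frac12\lambda_{\min}(P_mB_\dmeas P_m)$ guaranteed by \eqref{eq:probability_interval} (this is exactly the paper's lemma \ref{lem:concentration_prob}, and it is indeed where the Hilbert--Schmidt hypothesis is consumed), the resulting Neumann-series bounds $\norm{(P_mB_XP_m)^+}\leq 2/\lambda_m$ and $\norm{(P_mB_XP_m)^+(P_mB_\dmeas P_m)}\lesssim 1$, and the use of ${\mathcal P}^\times(\ctwo)$ to control the cross term $(P_mB_\dmeas P_m)^+B_\dmeas(I-P_m)$. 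Your terms (i)+(ii)+(iii) are precisely the paper's $I_1$ after the observation that $(P_mB_XP_m)^+B_X-P_m=(P_mB_XP_m)^+B_X(I-P_m)$ on the good event, and your bound $2\ctwo R_0m^{-s}$ plus a discrepancy of the same order matches the paper's $(2\ctwo+5)R_0m^{-s}$ from proposition \ref{prop:inversebounds} and theorem \ref{thm:appr}.

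The one place where you genuinely diverge is the variance term (iv). The paper factors $I_2=\delta K_1K_2K_3$ with $K_1=(P_mB_XP_m)^{-1/2}$, $K_2=(P_mB_XP_m)^{-1/2}(P_mB_\dmeas P_m)^{1/2}$ (controlled via the Cordes inequality) and $K_3=(P_mB_\dmeas P_m)^{-1/2}A^*S_X^*\be$, and applies the vector-valued Pinelis--Bernstein bound of theorem \ref{thm:inf_concentration} to the i.i.d.\ variables $\xi_n=(P_mB_\dmeas P_m)^{-1/2}A^*S_{x_n}^*\epsilon_n$, with the key moment computation $\tr((P_mB_\dmeas P_m)^+B_\dmeas)=m$ producing the factor $m^{(t+1)/2}/\sqrt N$. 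You instead condition on the design $X$, recognize $(S_XAP_m)T_{m,N}^{-1}(S_XAP_m)^*$ as the empirical orthogonal projection onto an at-most-$m$-dimensional subspace of $\R^N$, and invoke a $\chi^2_m$ tail bound for the projected Gaussian noise. Your route is more elementary and in fact yields a slightly sharper bound of order $\delta\sqrt{m^t(m+\log(8/\eta))/N}$, which is dominated by the right-hand side of \eqref{eq:recon_error_prop} since $\log(8/\eta)>1$ and $m\geq1$; its cost is that it exploits exact Gaussianity of $\be$ (the paper's moment-based argument would extend to noise satisfying only Bernstein-type moment conditions), and it treats the two sources of randomness sequentially rather than jointly. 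Both arguments close the proof with a union bound over the finitely many favourable events, which is where the factor $8$ originates in each case.
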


Notice that with the condition \eqref{eq:probability_interval} theorem \ref{thm:main_result_prob} characterizes the error for tail probabilities. To derive convergence results in expectation, the authors in \cite{blanchard2018optimal} utilize a priori bound for the Tikhonov regularized solution valid almost surely, which can be merged with a sharper result concerning the tail probabilities. Here, such a priori bound is not immediately available as the ML estimator can have arbitrary large norm with positive probability. To mitigate this effect, we consider a non-linear truncated estimator inspired by earlier work in \cite{cohen2013stability}.
More precisely, let $T_R\!:{\mathcal H}\to {\mathcal H}$ satisfy
\begin{equation}
\label{eq:def_trunc}
T_R(f)=\begin{cases}f&\text{if}~\|f\|\leq R,\\
0,&\text{otherwise}\end{cases}
\end{equation}
where $R\geq 0$ is fixed. We define a non-linear estimator $\est = \est(\by)$ by setting
\begin{equation}
	\label{eq:def_est}
	\est = T_{R}(\fmnd),
\end{equation}
where $R$ is chosen depending on parameters specified below.
In the following result we characterize the \emph{upper rate of convergence in $L^p$} (see \cite[Def. 3.1]{blanchard2018optimal}) for $\est$. To that end, let ${\mathcal K}(\Theta)$ be the set of regular conditional probability distributions $\rho(\cdot | \cdot) : \R\times \dom \to \R_+$ such that $\rho(\cdot | x)$ is distributed according to the Gaussian distribution ${\mathcal N}(Af^\dagger(x), \delta^2)$ for some $f^\dagger \in \Theta$.
Moreover, let us denote the admissible class of models by
\begin{equation*}
	{\mathcal M}(\Theta',  {\mathcal P}') 
	= \left\{\jmeas(dx,\, dy)  = \likemeas(dy \, | \, x) \dmeas(dx) \; \Bigg| \; 
	\likemeas(\cdot | \cdot) \in {\mathcal K}(\Theta'), \dmeas \in {\mathcal P}'\right\},
\end{equation*}
where ${\mathcal P}' \subset {\mathcal P}$ and $\Theta' \subset {\mathcal H}$ are a subset of probability measures and a source condition, respectively, specified by some fixed admissible subspaces $V_m \subset {\mathcal H}$. 

\begin{theorem}
\label{thm:main_result_exp}
Let $\{V_m\}_{m=1}^\infty$ be a sequence of admissible subspaces and suppose $s,t,R_0, \cone, \ctwo>0$ satisfy $2s-t+1>0$. Moreover, we assume that $\est$ is the ML estimator defined by identity \eqref{eq:def_est}.
For the parameter choice 
\begin{equation}
	\label{eq:opt_parameter_choice}
	m = \left(\frac{\delta}{R_0 \sqrt N}\right)^{-\frac{2}{2s+t+1}}
\end{equation}
with $R = R(m,\delta)$ given by 
\begin{equation}
	\label{eq:choice_of_R}
	R = C\left(\frac{\delta}{\lambda_{min}(P_m B_\nu P_m)}  + 1\right)
\end{equation}
with suitably large constant $C$ depending on $\ctwo$ and $R_0$,
it holds that
\begin{equation*}
	\sup_{(\delta, R_0) \in \R^2_+}  \limsup_{N\to\infty} \sup_{\mu \in {\mathcal M}'} \frac{ \left(\E \norm{\est - f^\dagger}_{{\mathcal H}}^p\right)^{\frac 1p}}{a_{N, R_0, \delta}} < \infty 
\end{equation*}
where $\bdmeas_N = \otimes_{n=1}^N \dmeas$, ${\mathcal M}' = {\mathcal M}(\Theta(s,R_0), {\mathcal P}^{>}(t,\cone) \cap {\mathcal P}^\times(\ctwo))$ and
\begin{equation*}
	a_{N, R_0, \delta} = R_0 \left(\frac{\delta}{R_0\sqrt N}\right)^{\frac{2s}{2s+t+1}}.
\end{equation*}
\end{theorem}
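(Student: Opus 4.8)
The plan is to lift the pointwise tail bound \eqref{eq:recon_error_prop} of Theorem~\ref{thm:main_result_prob} to an $L^p$ bound by integrating it over the confidence level $\eta$, using the cut-off $T_R$ only to kill the contribution of the regime in which \eqref{eq:probability_interval} is violated. Fix $m$ by \eqref{eq:opt_parameter_choice}, put $\lambda_\star = \lambda_{\min}(P_mB_\dmeas P_m)$ and $v_N = \delta\big(m^t/N + m^{(t+1)/2}/\sqrt N\big)$, and let $\eta_N$ saturate \eqref{eq:probability_interval}, i.e.\ $\log(8/\eta_N) = \tfrac{\sqrt N}{12}\lambda_\star$. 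Two deterministic facts are used throughout: $\norm{\est - f^\dagger}_{{\mathcal H}}\le R + R_0$ always (by \eqref{eq:def_trunc} and, since $P_0=0$ and $f^\dagger\in\Theta(s,R_0)$, $\norm{f^\dagger}_{{\mathcal H}}\le R_0$), and $\{\est\ne\fmnd\}\subseteq\{\norm{\fmnd}_{{\mathcal H}}>R\}$.

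The first substantial step is an a priori norm bound, which is where \eqref{eq:choice_of_R} enters: I would show $\Prob(\norm{\fmnd}_{{\mathcal H}}>R)\le\eta_N$. On the Bernstein-type concentration event $\Omega_N$ underlying the proof of Theorem~\ref{thm:main_result_prob} — on which $\Prob(\Omega_N^c)\le\eta_N$ and the empirical normal operator $B_{\dmeas,X}=N^{-1}\sum_n(P_mA^*k_{x_n})\otimes(P_mA^*k_{x_n})$ is comparable to its mean $P_mB_\dmeas P_m$ on $V_m$, in particular $\lambda_{\min}(B_{\dmeas,X})\gtrsim\lambda_\star$ — one writes $\fmnd=B_{\dmeas,X}^+N^{-1}P_mA^*S_X^*\by$ and splits $\by = S_XAP_mf^\dagger + S_XA(I-P_m)f^\dagger + \delta\be$: the first summand contributes $P_mf^\dagger$ (norm $\le R_0$), the second is controlled through $\dmeas\in{\mathcal P}^\times(\ctwo)$ by $\lesssim\ctwo R_0(m+1)^{-s}$, and the Gaussian summand, being $\mathcal N(0,\delta^2N^{-1}B_{\dmeas,X}^+)$ given $X$, has norm $\lesssim\delta\sqrt{m/(N\lambda_\star)}$ up to lower-order Gaussian-concentration corrections. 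Since \eqref{eq:opt_parameter_choice} forces $m=o(N)$ and $\lambda_\star\le\norm{B_\dmeas}$ is bounded, $\delta\sqrt{m/(N\lambda_\star)}=o(\delta/\lambda_\star + 1)$, so with the constant $C$ in \eqref{eq:choice_of_R} chosen large enough in terms of $\ctwo$ and $R_0$ one gets $\norm{\fmnd}_{{\mathcal H}}\le R$ on $\Omega_N$ for all large $N$, hence $\Prob(\norm{\fmnd}_{{\mathcal H}}>R)\le\Prob(\Omega_N^c)\le\eta_N$.

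Next I would integrate. From $\{\norm{\est - f^\dagger}_{{\mathcal H}}>\tau\}\subseteq\{\norm{\fmnd - f^\dagger}_{{\mathcal H}}>\tau\}\cup\{\norm{\fmnd}_{{\mathcal H}}>R\}$ and the deterministic cap, $\E\norm{\est - f^\dagger}_{{\mathcal H}}^p = p\int_0^{R+R_0}\tau^{p-1}\Prob(\norm{\est - f^\dagger}_{{\mathcal H}}>\tau)\,d\tau \le p\int_0^{R+R_0}\tau^{p-1}\Prob(\norm{\fmnd - f^\dagger}_{{\mathcal H}}>\tau)\,d\tau + (R+R_0)^p\eta_N$. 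In the remaining integral, over the range $\tau\le\tau_N^\star:=C(R_0 m^{-s}+\log(8/\eta_N)v_N)$ insert \eqref{eq:recon_error_prop} rewritten with $\eta=8e^{-u}$, namely $\Prob(\norm{\fmnd - f^\dagger}_{{\mathcal H}}>C(R_0 m^{-s}+u\,v_N))\le 8e^{-u}$ for $0\le u\le\tfrac{\sqrt N}{12}\lambda_\star$ (and $\Prob\le1$ for $\tau\le CR_0 m^{-s}$); over $\tau_N^\star<\tau\le R+R_0$ bound the probability by $\eta_N$ via monotonicity. Using $\int_0^\infty u^{p-1}e^{-u}\,du=\Gamma(p)<\infty$, the logarithmic factor integrates to a constant and the first part is $\le C_p((R_0 m^{-s})^p + v_N^p)$, the second $\le (R+R_0)^p\eta_N$; altogether $\E\norm{\est - f^\dagger}_{{\mathcal H}}^p\le C_p((R_0 m^{-s})^p + v_N^p) + 2(R+R_0)^p\eta_N$.

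It remains to insert \eqref{eq:opt_parameter_choice} and compare with $a_{N,R_0,\delta}$. A direct computation gives the identities $R_0 m^{-s}=a_{N,R_0,\delta}$ and $\delta m^{(t+1)/2}/\sqrt N=a_{N,R_0,\delta}$, together with $\delta m^t/N = a_{N,R_0,\delta}\cdot(\delta/R_0)^{(1-t)/(2s+t+1)}N^{-(s+1)/(2s+t+1)}=o(a_{N,R_0,\delta})$; hence $(R_0 m^{-s})^p + v_N^p\le(2+o_N(1))^pa_{N,R_0,\delta}^p$. For the cut-off remainder, $\dmeas\in{\mathcal P}^{>}(t,\cone)$ gives $\lambda_\star\ge\cone m^{-t}$, so $R\le C(\delta m^t/\cone+1)$ grows only polynomially in $N$, while $\eta_N\le 8\exp(-\tfrac{\cone}{12}\sqrt N m^{-t})$; and since $2s-t+1>0$ the exponent $\sqrt N m^{-t}=(\delta/R_0)^{2t/(2s+t+1)}N^{(2s+1-t)/(2(2s+t+1))}$ is a positive power of $N$, so $(R+R_0)^p\eta_N$ decays faster than every power of $N$, in particular faster than the only-polynomially-decaying $a_{N,R_0,\delta}^p$. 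Collecting, $(\E\norm{\est - f^\dagger}_{{\mathcal H}}^p)^{1/p}\le C\,a_{N,R_0,\delta}\,(1+o_N(1))$ with $C$ independent of $\delta,R_0$ and of $\mu\in{\mathcal M}'$, since all of the above used $f^\dagger$ and $\dmeas$ only through membership in $\Theta(s,R_0)$ and ${\mathcal P}^{>}(t,\cone)\cap{\mathcal P}^\times(\ctwo)$; taking $\limsup_{N\to\infty}$ (for fixed $\delta,R_0$) and then $\sup_{(\delta,R_0)}$ gives the claim. The hardest point is the a priori bound of the second paragraph: Theorem~\ref{thm:main_result_prob} as stated says nothing about $\norm{\fmnd}_{{\mathcal H}}$ — which is exactly why the cut-off is needed — so one must re-enter its proof for the concentration of $B_{\dmeas,X}$ and the control of the noise component, and then verify, uniformly over the admissible class, that the prescribed $R$ in \eqref{eq:choice_of_R} really dominates this bound, which is where ${\mathcal P}^\times(\ctwo)$ and $m=o(N)$ are used.
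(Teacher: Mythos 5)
Your overall strategy coincides with the paper's: control the truncation event separately, lift the tail bound of theorem \ref{thm:main_result_prob} to an $L^p$ bound by integration (the paper invokes \cite[Cor. C.2]{blanchard2018optimal}, you do the equivalent layer-cake computation by hand via the substitution $\eta=8e^{-u}$), and then insert the parameter choice \eqref{eq:opt_parameter_choice} and use $2s-t+1>0$ to show that the exponential remainders are dominated by $a_{N,R_0,\delta}^p$. Your rate identities $R_0m^{-s}=\delta m^{(t+1)/2}/\sqrt N=a_{N,R_0,\delta}$ and the asymptotics of $\sqrt N m^{-t}$ agree with the paper's computation, and the uniformity over ${\mathcal M}'$ is argued correctly.

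The one genuine gap is in your a priori bound $\Prob(\norm{\fmnd}_{{\mathcal H}}>R)\leq\eta_N$. You obtain it by asserting that $\norm{\fmnd}_{{\mathcal H}}\leq R$ holds \emph{on} the design-concentration event $\Omega_N$, but that event constrains only the sample $X$ (through $\norm{B_X-B_\dmeas}_\HS\leq\tfrac12\lambda_m$), whereas the Gaussian noise $\be$ is unbounded; your own qualifier ``up to lower-order Gaussian-concentration corrections'' concedes that the bound on the noise contribution is itself only a high-probability statement, so the inclusion $\{\norm{\fmnd}_{{\mathcal H}}>R\}\subset\Omega_N^c$ is false and the claimed probability bound does not follow as written. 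The paper's proposition \ref{prop:J2} closes exactly this hole: on $\Omega_+$ one has the deterministic inequality $\norm{I_2}\leq(2\delta/\lambda_m)\norm{\be}_N$, and $\Prob\left(\norm{\be}_N>\sqrt2\right)\leq e^{-N/8}$ by Bernstein's inequality for the sub-exponential variables $\epsilon_j^2$; this is also why $R$ must be taken of size $C(\delta/\lambda_m+1)$ rather than of the smaller ``typical'' size $\delta\sqrt{m/(N\lambda_m)}$ that you compute. The repair costs only an additional additive term $e^{-N/8}$ in $\Prob(\norm{\fmnd}_{{\mathcal H}}>R)$, which is still superpolynomially small, so your argument goes through once this step is stated as a two-event (design plus noise) decomposition.
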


Following the techniques utilized in \cite{blanchard2018optimal} it is now possible to prove minimax optimality of the upper convergence rate provided above. We formulate this result as the following theorem. 

\begin{theorem}
\label{thm:minimax_optimality}
Let $s,t,R_0,\cone, \ctwo, \cthree>0$, $2s-t+1>0$, $B_\nu$ is a Hilbert--Schmidt operator and consider the design measures
\begin{equation*}
	 {\mathcal P}' = \Big\{\dmeas \in {\mathcal P} \;  \Big| \; \dmeas \in {\mathcal P}^{>}(t,\cone) \cap {\mathcal P}^\times(\ctwo) \cap {\mathcal P}^{<}(t, \cthree)\Big\} 
\end{equation*}
and $\Theta' = \Theta(s,R_0)$ given by \eqref{eq:source}.
Then the sequence of estimators $\est$ with parameter choice rule \eqref{eq:opt_parameter_choice} and \eqref{eq:choice_of_R} is strong minimax optimal in $L^p$ for all $p>0$ over the class ${\mathcal M}(\Theta', {\mathcal P}')$, i.e., the upper rate of convergence $a_{N,R_0,\delta}$ given by theorem \ref{thm:main_result_exp} is also strong minimax lower rate of convergence such that
\begin{equation*}
	\inf_{(\delta, R_0) \in \R^2_+} \liminf_{n\to\infty} \inf_{\hat f} \sup_{\mu \in {\mathcal M}(\Theta', {\mathcal P}')} \frac{ \left(\E \norm{\hat f - f^\dagger}_{{\mathcal H}}^p\right)^{\frac 1p}}{a_{N, R_0, \delta}} >0,
\end{equation*}
where the infimum is taken over all estimators (measurable mappings) $\hat f : \dom^N \times \R^N \to \mathcal H$.
\end{theorem}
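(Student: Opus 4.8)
The plan is to follow the information-theoretic route of \cite{blanchard2018optimal}: reduce the minimax lower bound to a hypothesis testing problem over an exponentially large, well-separated family of ground truths and apply a Fano/Varshamov--Gilbert argument. Concretely, for each scale $M\in\N$ I construct a finite set $\{f_\omega\}_{\omega\in\Omega}\subset\Theta(s,R_0)$ together with the models $\jmeas_\omega\in{\mathcal M}(\Theta',{\mathcal P}')$ obtained by pairing a fixed $\dmeas\in{\mathcal P}'$ with the Gaussian likelihoods $\likemeas_\omega(\cdot\,|\,x)={\mathcal N}(Af_\omega(x),\delta^2)$, such that the $f_\omega$ are pairwise $2\psi_M$-separated in ${\mathcal H}$ while the product data laws $\jmeas_\omega^{\otimes N}$ are mutually close in Kullback--Leibler divergence. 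The standard reduction then yields $\inf_{\hat f}\sup_{\omega}\bigl(\E\norm{\hat f-f_\omega}_{\mathcal H}^p\bigr)^{1/p}\gtrsim\psi_M$, and it remains to optimise $M$ against $N$.

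For the construction I fix an orthonormal basis $(e_i)_{i\ge1}$ of ${\mathcal H}$ adapted to the admissible filtration, i.e.\ $e_i\in V_i\ominus V_{i-1}$ --- this is precisely where the nested structure of Definition~\ref{def:admissible} enters --- and I locate a subspace $W_M\subset V_M$ of dimension $M'$ with $M'\asymp M$ on which the normal operator acts weakly, $\langle B_\dmeas v,v\rangle_{\mathcal H}\le C M^{-t}\norm{v}_{\mathcal H}^2$ for all $v\in W_M$. Choosing a Varshamov--Gilbert packing $\Omega\subset\{-1,1\}^{M'}$ with $|\Omega|\ge 2^{M'/8}$ and pairwise Hamming distance $d_H$ at least $M'/8$, and an orthonormal basis $g_1,\dots,g_{M'}$ of $W_M$, I set
\[
f_\omega=\epsilon\sum_{i=1}^{M'}\omega_i\,g_i,\qquad\omega\in\Omega,
\]
with $\epsilon=\epsilon(M,N,\delta,R_0)>0$ fixed below.

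Three verifications remain. (i) Source membership: since $f_\omega\in V_M$ we have $\norm{(I-P_j)f_\omega}_{\mathcal H}=0$ for $j\ge M$ and $\norm{(I-P_j)f_\omega}_{\mathcal H}\le\norm{f_\omega}_{\mathcal H}=\epsilon\sqrt{M'}\le R_0(j+1)^{-s}$ for $j<M$ as soon as $\epsilon\sqrt{M'}\le R_0 M^{-s}$, which fixes $\epsilon$ up to a constant; then $f_\omega\in\Theta(s,R_0)$ and, since $\dmeas\in{\mathcal P}'$ by assumption, $\jmeas_\omega\in{\mathcal M}'$. (ii) Separation: orthonormality of the $g_i$ and the packing give $\norm{f_\omega-f_{\omega'}}_{\mathcal H}^2=4\epsilon^2\,d_H(\omega,\omega')\ge\tfrac12\epsilon^2 M'\asymp R_0^2 M^{-2s}$, hence $\psi_M\asymp R_0 M^{-s}$. (iii) Information bound: for Gaussian likelihoods with common variance the $\dmeas$-parts cancel and $\mathrm{KL}(\jmeas_\omega^{\otimes N}\|\jmeas_{\omega'}^{\otimes N})=\tfrac{N}{2\delta^2}\langle B_\dmeas(f_\omega-f_{\omega'}),f_\omega-f_{\omega'}\rangle_{\mathcal H}\le\tfrac{N}{2\delta^2}\,C M^{-t}\cdot4\epsilon^2 M'$, since $f_\omega-f_{\omega'}\in W_M$; the Fano requirement $\frac1{|\Omega|}\sum_\omega\mathrm{KL}(\jmeas_\omega^{\otimes N}\|\jmeas_{\omega_0}^{\otimes N})\le\alpha\log|\Omega|$ thus holds provided $\epsilon^2\lesssim\delta^2 M^{t}/N$. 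Balancing the two constraints $\epsilon^2\lesssim R_0^2 M^{-2s-1}$ and $\epsilon^2\lesssim\delta^2 M^{t}/N$ forces $M\asymp(NR_0^2/\delta^2)^{1/(2s+t+1)}$ --- the same order as the parameter $m$ in \eqref{eq:opt_parameter_choice} --- whence $\psi_M\asymp R_0M^{-s}\asymp a_{N,R_0,\delta}$. Feeding $\{f_\omega\}\subset\Theta'$, $\{\jmeas_\omega\}\subset{\mathcal M}'$ into the reduction gives a lower bound of order $a_{N,R_0,\delta}$ with a constant independent of $N$, $\delta$ and $R_0$, from which the $\liminf_{N\to\infty}$ and then $\inf_{(\delta,R_0)}$ in the statement follow by the same bookkeeping as in \cite{blanchard2018optimal}.

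The genuine obstacle is the geometric step I slid over: producing $W_M\subset V_M$ of dimension proportional to $M$ on which $\langle B_\dmeas\,\cdot\,,\cdot\rangle_{\mathcal H}$ is of order $M^{-t}$. The natural candidate is the span of the $\lceil M/2\rceil$ lowest eigenvectors of the finite restriction $P_MB_\dmeas|_{V_M}$, for which $\langle B_\dmeas v,v\rangle_{\mathcal H}\le\lambda_{\lceil M/2\rceil}(P_MB_\dmeas|_{V_M})\norm{v}_{\mathcal H}^2$; one must then show this bulk eigenvalue is $O(M^{-t})$. The Hilbert--Schmidt hypothesis on $B_\dmeas$ forces square-summability of the eigenvalues of all the compressions and hence $\lambda_{\lceil M/2\rceil}(P_MB_\dmeas|_{V_M})=O(M^{-1/2})$, which already closes the argument when $t\le\tfrac12$; for the full range $2s-t+1>0$ one must bring in ${\mathcal P}^\times(\ctwo)$ --- which bounds, at every scale $m$, the coupling between $V_m$ and $V_m^\perp$ at the rate $m^{-t}$ --- together with ${\mathcal P}^{<}(t,\cthree)$ and the nestedness of $\{V_m\}$ to upgrade the decay exponent from $\tfrac12$ to $t$. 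Establishing this spectral estimate, and tracking that all constants stay uniform in $(\delta,R_0)$, is where the real work sits; the testing and optimisation steps around it are routine adaptations of \cite{blanchard2018optimal}.
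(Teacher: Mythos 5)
Your overall architecture matches the paper's: a Fano/packing reduction over sign-vector perturbations placed in high-index directions of the admissible filtration, the Kullback--Leibler divergence computed as $\frac{N}{2\delta^2}\norm{B_\dmeas^{1/2}(f_\omega-f_{\omega'})}_{\mathcal H}^2$, and the balance $M\asymp(NR_0^2/\delta^2)^{1/(2s+t+1)}$ recovering $a_{N,R_0,\delta}$. The difficulty is the step you yourself isolate and do not carry out: the existence of a subspace $W_M\subset V_M$ with $\dim W_M\asymp M$ on which $\langle B_\dmeas v,v\rangle_{\mathcal H}\lesssim M^{-t}\norm{v}_{\mathcal H}^2$. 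This is not a technical footnote but the load-bearing wall of the lower bound: without it your argument only closes for $t\le \tfrac12$ (via the Hilbert--Schmidt hypothesis), which does not cover the range allowed by $2s-t+1>0$. Note also that your fallback candidate does not follow from the stated hypotheses: ${\mathcal P}^{<}(t,\cthree)$ controls only the \emph{smallest} eigenvalue of each compression $P_mB_\dmeas P_m|_{V_m}$ --- one number per scale --- and says nothing about the median eigenvalue $\lambda_{\lceil M/2\rceil}(P_MB_\dmeas|_{V_M})$ that your construction needs, so the claimed upgrade of the decay exponent from $\tfrac12$ to $t$ is asserted, not established.

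The paper avoids constructing such a bulk-spectral subspace altogether. It fixes the Gram--Schmidt basis $\{e_\ell\}$ adapted to the filtration ($e_\ell\in V_\ell$, $\{e_j\}_{j\le \ell}$ spanning $V_\ell$) and supports the perturbations on the band $\ell\in(k,2k]$ with equal weights, $f_i=\frac{\epsilon}{\sqrt k}\sum_{\ell=k+1}^{2k}\pi_i^{(\ell-k)}e_\ell$ with $k\asymp(R/\epsilon)^{1/s}$; the KL bound then requires only the \emph{diagonal} decay $\langle e_\ell,B_\dmeas e_\ell\rangle_{\mathcal H}\lesssim \ell^{-t}$, one scalar per scale, which is the quantity the paper ties to ${\mathcal P}^{<}(t,\cthree)$, while the source-condition and separation bookkeeping is essentially identical to yours. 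If you want to complete your version, the cleanest repair is to adopt this band construction and reduce your missing spectral estimate to that single diagonal bound, rather than to a statement about half the spectrum of $P_MB_\dmeas|_{V_M}$. As it stands, the proposal has a genuine gap at its central step.
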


\begin{remark}
\label{rem:comparison_with_blan}
There is partial overlap between results provided by Blanchard and M\"ucke in \cite{blanchard2018optimal} and those proved here. Namely,
truncated singular value decomposition is a classical method of spectral regularization,
where the unknown is projected to a finite number of eigenvectors of $B_\dmeas$. 

Suppose $B_\nu$ has a polynomially decaying spectrum and $V_m$ is spanned by the  eigenvectors corresponding to the $m$ largest eigenvalues. 
First, it is straightforward to prove that the classical source condition satisfies
\begin{equation*}
	\Xi(r,R_0) := \{f = B_\nu^{r} g \; | \; \norm{g}_{{\mathcal H}} \leq R_0\} \subset \Theta\left(rt,R_0\right).
\end{equation*}
Second, we have $\lambda_{min}(P_m B_\dmeas P_m) = \lambda_m$, where $\{\lambda_m\}_{m=1}^\infty$ are the eigenvalues of $B_\nu$ in decreasing order, giving rise to the interpretation of our parameter $t$ as the polynomial decay of spectrum of $B_\nu$ (denoted by $b$ in \cite[p. 983]{blanchard2018optimal}). Furthermore, we have
\begin{equation*}
	\norm{(P_m B_\dmeas P_m)^+ B_\dmeas (I-P_m)} = 0	
\end{equation*}
for any $m\in \N$. 
Third, more general noise model is utilized in \cite{blanchard2018optimal}. For normally distributed noise, this would correspond to $\sigma = \delta$.
Finally, we notice that the qualification of the spectral projection is arbitrary \cite[Example 2.16]{blanchard2018optimal}.

As a consequence, the rate demonstrated by Blanchard and M\"ucke is given by
\begin{equation*}
	\tilde a_{N, R_0, \sigma}= R_0 \left(\frac{\delta}{R_0\sqrt N}\right)^{\frac{2br}{2br+r+1}}
	= R_0 \left(\frac{\delta}{R_0\sqrt N}\right)^{\frac{2s}{2s+t+1}} = a_{N, R_0, \delta},
\end{equation*}
where we identified $\sigma = \delta$, $s = rt$ and $b=t$.
\end{remark}

Let us also point out an interesting difference in the respective results: In our work, the upper convergence rate requires design measure to be limited to a set of type ${\mathcal P}^>$, where the ill-posedness of the inverse problem is limited. Similarly, the minimax optimality is given in a framework further limiting design measure to a set ${\mathcal P}^<$ that restricts "well-posedness" of the problem. Noticeably the setup is exactly the opposite in \cite{blanchard2018optimal}, where upper rate requires limiting design measure to ${\mathcal P}^<$. 
We would argue that the former is more natural as intuitively worse ill-posedness should lead to deteriorated identifiability and, therefore, slower convergence rate (likewise, better conditioning should lead to improved convergence rate). 
However, the difference can be attributed to the source conditions: the classical source condition intertwines the ill-posedness of the problem with the assumption on the smoothness of the ground truth, while in our case the dependence is less direct. While our source condition is independent of the forward operator, some interplay is required due to same subspace structure being applied in the assumptions regarding the design measure.

\subsection{Literature overview}

The literature on regularization by projection or discretization is extensive; for a wide perspective on the deterministic problem in terms of projections both in domain and range of a (nonlinear) forward operator, see e.g.
\cite{natterer1977regularisierung, groetsch1988convergence, vainikko1988self, plato1990regularization,groetsch1995regularization, kaltenbacher2000regularization, bruckner2002self,  hamarik2002solution, dahmen2006error, hofmann2007regularization, mathe2008regularization,kaltenbacher2012convergence, reginska2013two,kindermann2016projection}.
For adaptive or multigrid approaches, see \cite{kaltenbacher2001regularizing, kaltenbacher2003v, maass1998adaptive}. We also mention a recent data-driven approach, where the projections are designed based on data \cite{aspri2020data}.

Projection methods have also been studied in the framework of statistical inverse problems, where minimax optimality of the method is understood in various settings
\cite{wahba1977practical, nychka1989convergence, johnstone1991discretization, donoho1995nonlinear, mair1996statistical, lukas1998comparisons, cavalier2002sharp,chow1999statistical}.
Notice that the framework provided in aforementioned papers for statistical inverse problems is connected to the learning setting in the sense that point evaluation of the data function can be considered as a projection to kernel functions on a reproducing kernel Hilbert space. However, point evaluations arising from a random measure are not covered by this theory.

In this regard, our work has close connections to the reproducing kernel methods in learning theory, which is a popular field with a vast body of literature. Let us note that connections of kernel regression methods to regularization theory were first studied in \cite{de2006discretization, vito2005learning, gerfo2008spectral} and the line of research has since become widely popular. Early work on upper rates of convergence in a reproducing kernel Hilbert space was carried out by Smale and Cucker in \cite{cucker2002best}, where they utilized a covering number technique. After the initial success, there has been a long line of subsequent work \cite{vito2005learning, smale2005shannon, smale2007learning, bauer2007regularization, yao2007early, caponnetto2007optimal} providing convergence rates comparable to \cite{blanchard2018optimal}. 
Let us also point out that there is an avenue of research \cite{mendelson2010regularization, steinwart2009optimal} considering penalties of type $R(f) = \frac 1p \norm{f}_X^p$. Notice that the notion of convergence in the usual learning context  and the inverse problem setting is different and are not directly comparable: in learning theory the convergence rates are derived in $L^2(\dmeas)$ norm, where $\dmeas$ is the unknown sampling measure generating data points. However, since the solution and data space, i.e. ${\mathcal H}$ and $H_k$, differ for the inverse problem, it is natural to consider modes of convergence in ${\mathcal H}$. For a related discussion and brief overview on relevant convergence rate literature, see \cite{Muecke2017}.

In terms of inverse learning problems, we mention that Tikhonov regularization of non-linear inverse problems is considered in \cite{rastogi2019convergence} and adaptive parameter choice rules are studied 
in \cite{lu2020balancing}. Moreover, for distributed learning of inverse problems, see \cite{guo2017learning} and references therein.

This paper is organized as follows. In section 2 we provide mathematical preliminaries and record well-known concentration results that will be utilized later. Section 3 contains a proof for theorem \ref{thm:main_result_prob}, while theorem \ref{thm:main_result_exp} is derived in section 4.  In section 5 prove that the obtained rates are minimax optimal. Finally, section 5 is we provide brief conclusions.

\section{Mathematical preliminaries}

Recall the definition of a reproducing kernel Hilbert space.

\begin{definition}
A Hilbert space $H_k$ of functions $f: \dom \to \R$ with inner product $\langle \cdot, \cdot \rangle_{H_k}$ is called the
\emph{reproducing kernel Hilbert space (RKHS)} corresponding to a symmetric, positive definite kernel $k : \dom\times \dom \to \R$ if 
\begin{itemize}
	\item[i)] for all $x\in \dom$, $k(x,x')$, as a function of its second argument, belongs to $H_k$,
	\item[ii)] for all $x \in \dom$ and $f\in H_k$, $\langle f, k(x,\cdot)\rangle_{H_k} = f(x)$.
\end{itemize}
\end{definition}

We assume below, without loss of generality, that the kernel elements are uniformly bounded $\norm{k(x,\cdot)}_{H_k} \leq 1$ and, consequently, $\norm{S_x}_{H_k \to \R}\leq 1$ for any $x\in \dom$. Moreover, recall that $k$ was assumed to be strictly positive definite. Here, we assign $\R^N$ with the empirical inner product $\langle \cdot, \cdot\rangle_N$ given by
\begin{equation*}
		\langle \by, {\bf z}\rangle_N = \frac 1N \sum_{n=1}^N y_n z_n.
\end{equation*}

Let us now consider the adjoint operator $S_X^* : \R^N \to H_k$ that satisfies
\begin{equation*}
	\langle f, S_X^* \by\rangle_{H_k} = \frac 1N\sum_{n=1}^N \langle f, k(x_n,\cdot)\rangle_{H_k} y_n 
\end{equation*}
for any $f \in H_k$ and $\by = (y_n)_{n=1}^N \in \R^n$. Therefore, we have
\begin{equation*}
	S_X^* \by = \frac 1N\sum_{n=1}^N y_n k(x_n,\cdot)
\end{equation*}
and, consequently,
\begin{equation*}
	S_X^* S_X = \frac 1N \sum_{n=1}^N \langle k(x_n,\cdot),\cdot\rangle_{H_k} k(x_n,\cdot).
\end{equation*}
In similar vein, we introduce a short-hand notation $B_X$ for the normal operator
\begin{equation*}
	B_X = A^* S_X^* S_X A : H_k \to H_k.
\end{equation*}
We write $B_x$, when the ensemble $X$ is identified with one point $X = \{x\}$.
Clearly, we have $\E B_x = B_\dmeas$, when $x\sim \dmeas$. 

The operator $B_X$ can be considered as an empirical estimator of $B_\dmeas$ defined by identity \eqref{eq:Brho}. Its concentration rate is known and made precise by the corollary \ref{cor:op_concentration} below. To prove corollary \ref{cor:op_concentration}, the following general concentration result is used and will be utilized also elsewhere in this paper.

\begin{theorem}[{\cite[Cor. 1]{pinelis1986remarks}}]
\label{thm:inf_concentration}
Let $(\Omega, {\mathcal B}, \Prob)$ be a probability space and $\xi : \Omega \to H$ a random variable to a real separable Hilbert space $H$. Assume that there are two positive constants $L$ and $\sigma$ such that for any $p\geq 2$
\begin{equation}
	\label{eq:concentration}
	\E\norm{\xi - \E \xi}_H^p \leq \frac 12 p! \sigma^2 L^{p-2}.
\end{equation}
If the sample $\omega_1, ..., \omega_N$ is drawn i.i.d from $\Omega$ according to $\Prob$, then, for any $0<\eta<1$, we have
\begin{equation*}
	\norm{\frac 1N \sum_{j=1}^N \xi(\omega_j) - \E \xi}_H \leq \delta(N, \eta)
\end{equation*}
with probability greater than $1-\eta$, where
\begin{equation}
	\delta(N, \eta) = 2 \log\left(\frac 2\eta\right)\left(\frac L N + \frac \sigma{\sqrt N}\right).
\end{equation}
In particular, inequality \eqref{eq:concentration} holds if
\begin{eqnarray*}
	\norm{\xi(\omega)}_H & \leq &  \frac L 2\quad \text{a.s.} \quad \text{and} \\
	\E \norm{\xi}_H^2 & \leq & \sigma^2.
\end{eqnarray*}
\end{theorem}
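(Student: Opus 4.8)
I would treat the statement in two parts: the main tail bound, assumed under the Bernstein-type moment condition \eqref{eq:concentration}, and the closing \emph{in particular} clause, which derives \eqref{eq:concentration} from the boundedness and second-moment hypotheses. I would dispose of the latter first, as it is elementary. Writing $Z=\xi-\E\xi$, Jensen's inequality gives $\norm{\E\xi}_H\leq\E\norm{\xi}_H\leq L/2$, so $\norm{Z}_H\leq L$ almost surely, while $\E\norm{Z}_H^2=\E\norm{\xi}_H^2-\norm{\E\xi}_H^2\leq\sigma^2$. For $p\geq2$ one then bounds $\E\norm{Z}_H^p\leq L^{p-2}\,\E\norm{Z}_H^2\leq\sigma^2L^{p-2}\leq\tfrac12 p!\,\sigma^2L^{p-2}$, which is exactly \eqref{eq:concentration}. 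Hence it suffices to prove the tail bound under the moment hypothesis.

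For the main assertion, set $Z_j=\xi(\omega_j)-\E\xi$, so that $Z_1,\dots,Z_N$ are i.i.d., centred, and satisfy \eqref{eq:concentration}; writing $S_N=\sum_{j=1}^N Z_j$ we have $\frac1N\sum_j\xi(\omega_j)-\E\xi=S_N/N$. The plan is to establish the Bernstein-type tail inequality $\Prob(\norm{S_N}_H\geq r)\leq 2\exp(-\tfrac{r^2}{2(N\sigma^2+Lr)})$ and then to invert it at $r=N\,\delta(N,\eta)$. Substituting and using $\sqrt{a+b}\leq\sqrt a+\sqrt b$ together with $\sqrt{2\log(2/\eta)}\leq 2\log(2/\eta)$ (valid since $\log(2/\eta)>\log 2>\tfrac12$ for $\eta<1$) turns the requirement $2\exp(\cdots)\leq\eta$ into precisely the stated $\delta(N,\eta)=2\log(\tfrac2\eta)(\tfrac LN+\tfrac\sigma{\sqrt N})$.

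The heart of the argument is the exponential moment bound
\[
\E\exp(\lambda\norm{S_N}_H)\leq 2\exp\left(\frac{N\sigma^2\lambda^2}{2(1-\lambda L)}\right),\qquad 0<\lambda<1/L,
\]
from which the displayed tail follows by Markov's inequality and the usual Bernstein optimisation over $\lambda$. I would prove it through the smooth even surrogate $\phi(u)=\cosh(\lambda\norm{u}_H)$, which---unlike $u\mapsto e^{\lambda\norm{u}_H}$---is a genuine power series in $\norm{u}_H^2$ and hence Fr\'echet-differentiable everywhere, with gradient $\lambda\,\sinh(\lambda\norm{u}_H)\,u/\norm{u}_H$ and a Hessian whose operator norm is at most $2\lambda^2\phi(u)$ (using $\sinh t\leq t\cosh t$). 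Conditioning on $\mathcal F_{k-1}$ and writing $u=S_{k-1}$, a second-order Taylor expansion of $\phi(u+Z_k)$ has its first-order term annihilated by $\E Z_k=0$, while \eqref{eq:concentration} sums the remainder into a geometric series $\sum_{p\geq2}\frac{\lambda^p}{p!}\E\norm{Z_k}_H^p\leq\frac{\sigma^2\lambda^2}{2(1-\lambda L)}$; this yields the one-step estimate $\E[\phi(u+Z_k)\mid\mathcal F_{k-1}]\leq\phi(u)\exp(\frac{\sigma^2\lambda^2}{2(1-\lambda L)})$. Telescoping over $k=1,\dots,N$ from $\phi(0)=1$ gives $\E\cosh(\lambda\norm{S_N}_H)\leq\exp(\frac{N\sigma^2\lambda^2}{2(1-\lambda L)})$, and the elementary bound $e^{\lambda\norm{S_N}_H}\leq 2\cosh(\lambda\norm{S_N}_H)$ supplies both the factor $2$ and the exponential moment bound above.

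The main obstacle is precisely this one-step estimate: one must control the Taylor remainder of $\phi$ uniformly in the intermediate point, which is delicate near $u=0$ (handled by the smoothness of $\cosh(\lambda\norm{\cdot}_H)$ and by the Hilbert-space fact that the norm has Hessian of operator norm at most $\norm{u}_H^{-1}$ at each $u\neq0$), and one must arrange the $\cosh$/$\sinh$ factors so that the remainder factors cleanly as $\phi(u)$ times a quantity independent of $u$. The Hilbert geometry enters exactly here, through $2$-smoothness of the norm; careful bookkeeping of the resulting constant is what pins the bound down to the exact $\delta(N,\eta)$ of the statement rather than merely to a Bernstein bound of the same shape. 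The remaining steps---the Chernoff optimisation over $\lambda$ and the inversion described in the second paragraph---are routine scalar computations.
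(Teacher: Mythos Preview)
The paper does not prove this theorem at all: it is quoted verbatim as \cite[Cor.~1]{pinelis1986remarks} and used as a black box (for instance, to obtain Corollary~\ref{cor:op_concentration} and in the proof of Theorem~\ref{thm:variance}). There is therefore no ``paper's own proof'' to compare your proposal against.

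For what it is worth, your sketch follows the standard Pinelis route---the $\cosh$ surrogate, the martingale one-step bound exploiting $2$-smoothness of the Hilbert norm, Chernoff optimisation, and inversion of the resulting Bernstein tail---and both the elementary verification of the closing \emph{in particular} clause and the inversion arithmetic (including the check $\log(2/\eta)>\log 2>\tfrac12$ so that $\sqrt{2\log(2/\eta)}\leq 2\log(2/\eta)$) are correct. The one place where your write-up is genuinely compressed is the one-step estimate: the sentence that ``sums the remainder into a geometric series $\sum_{p\geq2}\frac{\lambda^p}{p!}\E\norm{Z_k}_H^p$'' conflates the Fr\'echet Taylor expansion of $\phi$ in the Hilbert variable with the scalar power series in $\lambda$, and a full proof needs a careful integral-remainder argument bounding the Hessian of $\cosh(\lambda\norm{\cdot}_H)$ at intermediate points by $\lambda^2\phi(\cdot)$ before the moment condition \eqref{eq:concentration} can be invoked. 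You correctly flag this as the main obstacle; just be aware that the displayed geometric series is the \emph{outcome} of that Hessian bound combined with the moment hypothesis, not a direct Taylor remainder. Since the paper itself defers entirely to Pinelis, any version of this argument goes beyond what the paper supplies.
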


Let us briefly note that a normal random variable $\xi \in {\mathcal N}(0,\delta^2)$ satisfies \eqref{eq:concentration} with $\sigma = L = \delta$.

\begin{corollary}[{\cite[Prop. 5.5]{blanchard2018optimal}}]
\label{cor:op_concentration}
Let $B_\nu$ be a Hilbert--Schmidt operator and $\norm{A}\leq 1$.
For any sample size $N>0$ and $0<\eta<1$ it holds that
\begin{equation*}
	\norm{B_\dmeas - B_X}_\HS \leq 6 \log\left(\frac 2\eta\right) \frac 1{\sqrt N}
\end{equation*}
with probability greater than $1-\eta$.
\end{corollary}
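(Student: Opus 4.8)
The plan is to apply Theorem~\ref{thm:inf_concentration} with the ambient Hilbert space taken to be $\HS({\mathcal H})$, the space of Hilbert--Schmidt operators on ${\mathcal H}$ equipped with the Hilbert--Schmidt inner product (which is separable because ${\mathcal H}$ is), and with the random variable $\xi = B_x$, $x \sim \dmeas$. Two structural facts set the stage. First, $\E B_x = B_\dmeas$, as already recorded in the text. Second, the empirical average of $\xi$ reproduces $B_X$: since $S_X^* S_X = \frac 1N \sum_{n=1}^N \langle k(x_n,\cdot),\cdot\rangle_{H_k} k(x_n,\cdot)$, we get
\begin{equation*}
	\frac 1N \sum_{n=1}^N B_{x_n} = A^* \left(\frac 1N \sum_{n=1}^N \langle k(x_n,\cdot),\cdot\rangle_{H_k} k(x_n,\cdot)\right) A = A^* S_X^* S_X A = B_X,
\end{equation*}
so that $\frac 1N\sum_{n=1}^N \xi(x_n) - \E\xi = B_X - B_\dmeas$. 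It therefore remains only to verify the two moment conditions at the end of Theorem~\ref{thm:inf_concentration} with explicit constants $L$ and $\sigma$.

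Next I would bound $\norm{\xi}_\HS$ pointwise. The operator $S_x^* S_x = \langle k(x,\cdot),\cdot\rangle_{H_k}\, k(x,\cdot)$ is rank one, hence its Hilbert--Schmidt norm equals its operator norm, which is $\norm{k(x,\cdot)}_{H_k}^2 \le 1$ by the standing normalization. Using submultiplicativity of the Hilbert--Schmidt norm against the operator norm, $\norm{A^* T A}_\HS \le \norm{A}^2 \norm{T}_\HS$, together with $\norm{A}\le 1$, gives $\norm{B_x}_\HS = \norm{A^* S_x^* S_x A}_\HS \le 1$ almost surely, and in particular $\E \norm{B_x}_\HS^2 \le 1$. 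Thus the sufficient conditions at the end of Theorem~\ref{thm:inf_concentration} hold with $L/2 = 1$ and $\sigma^2 = 1$, i.e.\ $L = 2$ and $\sigma = 1$.

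Finally, Theorem~\ref{thm:inf_concentration} yields, with probability at least $1-\eta$,
\begin{equation*}
	\norm{B_\dmeas - B_X}_\HS \le \delta(N,\eta) = 2\log\left(\frac 2\eta\right)\left(\frac 2N + \frac 1{\sqrt N}\right),
\end{equation*}
and one concludes by the elementary estimate $\frac 2N \le \frac 2{\sqrt N}$ for integer $N \ge 1$, which makes the bracket at most $3/\sqrt N$ and hence the right-hand side at most $6\log(2/\eta)/\sqrt N$, as claimed. I do not anticipate a genuine obstacle: the only points needing a moment's care are that $S_x^* S_x$ is rank one (which is precisely what allows the Hilbert--Schmidt norm to be replaced by the bounded operator norm) and the separability of $\HS({\mathcal H})$ so that Theorem~\ref{thm:inf_concentration} applies; note that the hypothesis ``$B_\dmeas$ Hilbert--Schmidt'' is in fact automatic once $\E\norm{B_x}_\HS^2 < \infty$, but it is convenient to have stated it.
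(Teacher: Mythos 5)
Your proposal is correct and is essentially the argument the paper intends (it cites \cite[Prop.~5.5]{blanchard2018optimal}, whose proof is exactly this application of theorem \ref{thm:inf_concentration} to $\xi = B_x$ in the separable Hilbert space of Hilbert--Schmidt operators, with $L=2$, $\sigma=1$, and the elementary bound $2/N \leq 2/\sqrt N$). The verification that $\|B_x\|_\HS \leq 1$ via the rank-one structure of $S_x^*S_x$ and $\norm{A}\leq 1$ is the right justification for the moment conditions, and the constants work out to the stated $6\log(2/\eta)/\sqrt N$.
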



In what follows, $B^+$ stands for the Moore--Penrose pseudoinverse of an operator $B$ \cite{engl1996regularization}.
Notice that we frequently utilize the following two properties of pseudoinverse
\begin{equation}
\label{eq:aux1}
	(P_m B P_m)^+ =P_m(P_m B P_m)^+=(P_m B P_m)^+ P_m
\end{equation}
and 
\begin{equation}
	\label{eq:aux2}
	(P_m B P_m)^+ P_m B P_m=P_m B P_m (P_m B P_m)^+,
\end{equation}
where $B : {\mathcal H} \to {\mathcal H}$ is a linear bounded operator. 

\section{Concentration result}

In this section we assume that $B_\nu$ is a Hilbert--Schmidt operator and derive the proof for theorem \ref{thm:main_result_prob}. To this end, 
we will make gradual assumptions on the interplay between $B_\dmeas$ and subspaces $V_m$
to build towards the framework $\dmeas \in  {\mathcal P}^{>}(t,\cone) \cap {\mathcal P}^\times(\ctwo)$ assumed in the theorem.
For convenience, let us abbreviate
\begin{equation*}
	\lambda_m = \lambda_{min}(P_m B_\dmeas P_m)
\end{equation*}
for any $m\geq 1$.
Later, we provide the precise connection of the final estimate to constants $\cone$ and $\ctwo$ and, therefore, keep track of them below.

Next, consider the definition of $\fmnd$. A least squares solution $f$ to $S_X A f = \byd$ in $V_m$ clearly solves the normal equation
\begin{equation*}
	P_m B_X P_m f - (S_X A P_m)^* \byd = 0 \quad \text{a.s.}
\end{equation*}
It follows that $\fmnd$ is obtained with the pseudoinverse 
\begin{equation*}
	\fmnd = \left(P_m B_X P_m\right)^+ (S_X A P_m)^* \byd \quad \text{a.s.},
\end{equation*}
and, consequently,
\begin{eqnarray}
	\fmnd - f^\dagger & = & \left(P_m B_X P_m\right)^+ (S_X A)^* \byd - f^\dagger \nonumber \\
	& = & \left(\left(P_m B_X P_m\right)^+ B_X - I \right)f^\dagger + \delta \left(P_m B_X P_m\right)^+ (S_X A)^* \be \nonumber \\
	& =: & I_1 + I_2, \label{eq:recon_err_id}
\end{eqnarray}
where we utilized identity \eqref{eq:aux1} for $B=B_X$ and abbreviated
\begin{equation}
	\label{eq:idforI1I2}
	I_1 = \left(\left(P_m B_X P_m\right)^+ B_X - I \right)f^\dagger
	\quad \text{and} \quad 
	I_2 = \delta \left(P_m B_X P_m\right)^+ (S_X A)^* \be.
\end{equation}
In literature, the term $I_1$ is often called \emph{approximation error}, while $I_2$ is referred to as the \emph{variance}.

Let us derive some concentration properties for $P_m B_X P_m$ utilizing corollary \ref{cor:op_concentration}. Carefully notice that below we vary the operator norm between the standard norm $\norm{\cdot}$ of linear bounded operators and the Hilbert--Schmidt norm $\norm{\cdot}_\HS$. Recall that $\norm{B} \leq \norm{B}_\HS$ and
\begin{equation*}
	\norm{A_1 B A_2}_\HS \leq \norm{A_1} \norm{B}_\HS \norm{A_2}
\end{equation*}
for any linear bounded operators $A_1, A_2, B \in {\mathcal L}({\mathcal H})$.

\begin{lemma}
\label{lem:concentration_prob}
Let $0< \eta < 1$ satisfy
\begin{equation}
	\label{eq:firstetabound}
	\log\left(\frac 2\eta\right) \leq \frac {\sqrt N}{12} \lambda_m.
\end{equation}
With probability greater than $1-\eta$, it holds that
\begin{equation}
\label{eq:concentrationassumption}
	\norm{B_X- B_\dmeas}_\HS \leq\frac 12 \lambda_m.
\end{equation}
\end{lemma}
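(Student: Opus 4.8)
The plan is to invoke the operator concentration bound in Corollary~\ref{cor:op_concentration} directly and then substitute the hypothesis on $\eta$. Since $B_\nu$ is assumed Hilbert--Schmidt and $\norm{A}\leq 1$ by the standing normalization, Corollary~\ref{cor:op_concentration} applies verbatim: for the given sample size $N$ and confidence level $\eta$, with probability greater than $1-\eta$ we have
\begin{equation*}
	\norm{B_X - B_\dmeas}_\HS \leq 6\log\left(\frac 2\eta\right)\frac 1{\sqrt N}.
\end{equation*}

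Next I would bound the right-hand side using \eqref{eq:firstetabound}. Multiplying the assumed inequality $\log(2/\eta)\leq \frac{\sqrt N}{12}\lambda_m$ by $6/\sqrt N$ gives $6\log(2/\eta)/\sqrt N \leq \frac 12\lambda_m$, and chaining this with the concentration estimate yields \eqref{eq:concentrationassumption} on the same event of probability at least $1-\eta$. That completes the argument.

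There is essentially no obstacle here: the lemma is a direct specialization of Corollary~\ref{cor:op_concentration} tuned so that the deviation is controlled by half the smallest nonzero eigenvalue $\lambda_m$ of $P_mB_\dmeas P_m$, which is exactly the quantity one needs to keep $P_mB_XP_m$ boundedly invertible on $V_m$ (via a Neumann-series / perturbation argument in the next step). The only point worth a word of care is that the constant $6$ in Corollary~\ref{cor:op_concentration} and the constant $12$ in \eqref{eq:firstetabound} are matched precisely so the factor $\tfrac12$ comes out; I would simply remark that the choice of $12$ in the hypothesis is made for this reason.
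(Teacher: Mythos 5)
Your argument is exactly the paper's proof: rewrite the hypothesis \eqref{eq:firstetabound} as $6\log(2/\eta)/\sqrt N \leq \tfrac12\lambda_m$ and chain it with the bound of Corollary~\ref{cor:op_concentration}. Correct and complete; no issues.
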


\begin{proof}
Reformulating inequality \eqref{eq:firstetabound} we observe that
\begin{equation*}
	6\log\bigg(\frac{2}{\eta}\bigg)\frac{1}{\sqrt N} \leq \frac12 \lambda_m.
\end{equation*}
Now corollary~\ref{cor:op_concentration} immediately yields the result.
\end{proof}

We note that since for any $\eta$ satisfying $\eta\geq 2\,\exp\left(-\frac{1}{12}\lambda_m\sqrt{N}\right)$ and $\eta<1$, we also have $\lambda_m\sqrt{N}> 12\log 2$.

\begin{proposition}\label{prop:inversebounds}
Suppose $\dmeas \in {\mathcal P}^>(t,D_1)\cap {\mathcal P}^\times(\ctwo)$ and
inequality \eqref{eq:concentrationassumption} holds.
Then it follows that
\begin{eqnarray}
	\norm{P_m - (P_m B_\dmeas P_m)^+ P_m B_X P_m}_\HS & \leq & \frac 12,  \label{eq:conprop1_a1}\\
	\norm{(P_m B_X P_m)^+} & \leq & \frac{2}{\lambda_m},  \label{eq:conprop1_a2}\\
	\norm{(P_m B_X P_m)^+(P_m B_\dmeas P_m)} & \leq & 2(1+\ctwo) \quad\quad\quad\quad \text{and}\label{eq:conprop1_a3}\\
	\norm{(P_m B_X P_m)^+ B_X (I-P_m)} & \leq & 2\ctwo + 4. \label{eq:conprop1_a4}
\end{eqnarray}
\end{proposition}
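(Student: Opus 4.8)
The plan is to control the random operator $G_X := P_mB_XP_m$ by comparison with its population version $G_\dmeas := P_mB_\dmeas P_m$, exploiting that $G_\dmeas$ is strictly positive definite on $V_m$. Both operators are self-adjoint, positive semidefinite, and vanish on $V_m^\perp$; by the definition of $\lambda_m$ we have $\norm{G_\dmeas^+}\leq\lambda_m^{-1}$, while strict positivity of $G_\dmeas$ on $V_m$ gives $G_\dmeas^+G_\dmeas=G_\dmeas G_\dmeas^+=P_m$. I would also use the pseudoinverse identities \eqref{eq:aux1}--\eqref{eq:aux2} and the submultiplicativity $\norm{A_1BA_2}_\HS\leq\norm{A_1}\norm{B}_\HS\norm{A_2}$ recorded above; in particular $\norm{G_\dmeas-G_X}_\HS=\norm{P_m(B_\dmeas-B_X)P_m}_\HS\leq\norm{B_X-B_\dmeas}_\HS\leq\tfrac12\lambda_m$ by \eqref{eq:concentrationassumption}.

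For \eqref{eq:conprop1_a1} I would telescope through $G_\dmeas^+G_\dmeas=P_m$:
\begin{equation*}
	P_m-G_\dmeas^+G_X=G_\dmeas^+(G_\dmeas-G_X),
\end{equation*}
whence $\norm{P_m-G_\dmeas^+G_X}_\HS\leq\norm{G_\dmeas^+}\,\norm{G_\dmeas-G_X}_\HS\leq\lambda_m^{-1}\cdot\tfrac12\lambda_m=\tfrac12$. The key point is that \eqref{eq:conprop1_a1} forces $G_X$ to be invertible on $V_m$: since $\norm{P_m-G_\dmeas^+G_X}\leq\norm{P_m-G_\dmeas^+G_X}_\HS\leq\tfrac12$, a Neumann series shows that the restriction of $G_\dmeas^+G_X$ to $V_m$ is invertible with inverse of norm at most $2$; as $G_\dmeas^+$ is itself bijective on $V_m$ (with inverse $G_\dmeas|_{V_m}$), the restriction $G_X|_{V_m}$ is invertible as well. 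Hence $G_X$ has kernel $V_m^\perp$ and range $V_m$, so $G_X^+$ acts as $(G_X|_{V_m})^{-1}$ on $V_m$ and as $0$ on $V_m^\perp$, which in particular gives $G_X^+G_X=P_m$. Factoring $(G_X|_{V_m})^{-1}=(G_\dmeas^+G_X|_{V_m})^{-1}\,G_\dmeas^+$ and taking norms yields \eqref{eq:conprop1_a2}, $\norm{G_X^+}\leq2\lambda_m^{-1}$.

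The last two estimates are then pseudoinverse bookkeeping on top of \eqref{eq:conprop1_a2}. For \eqref{eq:conprop1_a3}, subtract $G_X^+G_X=P_m$ to get $G_X^+G_\dmeas-P_m=G_X^+(G_\dmeas-G_X)$, so $\norm{G_X^+G_\dmeas-P_m}\leq\norm{G_X^+}\,\norm{G_\dmeas-G_X}\leq\tfrac{2}{\lambda_m}\cdot\tfrac12\lambda_m=1$, and therefore $\norm{G_X^+G_\dmeas}\leq2\leq2(1+\ctwo)$. For \eqref{eq:conprop1_a4}, split
\begin{equation*}
	G_X^+B_X(I-P_m)=G_X^+(B_X-B_\dmeas)(I-P_m)+G_X^+B_\dmeas(I-P_m);
\end{equation*}
the first term has norm at most $\norm{G_X^+}\,\norm{B_X-B_\dmeas}\leq1$, and for the second I would use \eqref{eq:aux1} in the form $G_X^+=G_X^+G_\dmeas G_\dmeas^+$ to write $G_X^+B_\dmeas(I-P_m)=(G_X^+G_\dmeas)\bigl((P_mB_\dmeas P_m)^+B_\dmeas(I-P_m)\bigr)$, which by the bound $\norm{G_X^+G_\dmeas}\leq2$ just obtained and the assumption $\dmeas\in{\mathcal P}^\times(\ctwo)$ is bounded by $2\ctwo$. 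Adding the two contributions gives $\norm{G_X^+B_X(I-P_m)}\leq1+2\ctwo\leq2\ctwo+4$.

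I expect the main obstacle to be the middle step: converting the global (Hilbert--Schmidt) closeness of $B_X$ to $B_\dmeas$ into quantitative invertibility of the \emph{random, finite-dimensional} operator $G_X$ on $V_m$, together with the sharp bound $\norm{G_X^+}\leq2\lambda_m^{-1}$. That is exactly where the eigenvalue hypothesis $\dmeas\in{\mathcal P}^{>}(t,\cone)$ (through $\lambda_m$) and the concentration event \eqref{eq:concentrationassumption} are consumed; once \eqref{eq:conprop1_a2} is in place, the remaining manipulations are routine, with the structural hypothesis $\dmeas\in{\mathcal P}^\times(\ctwo)$ entering only via the factorization $G_X^+=(G_X^+G_\dmeas)G_\dmeas^+$ used for \eqref{eq:conprop1_a4}.
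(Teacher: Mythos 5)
Your argument is correct, and it rests on the same underlying mechanism as the paper's proof: the concentration event \eqref{eq:concentrationassumption} plus $\|(P_mB_\dmeas P_m)^+\|=\lambda_m^{-1}$ gives $\|P_m-(P_mB_\dmeas P_m)^+P_mB_XP_m\|\leq\tfrac12$, a Neumann series then makes $P_mB_XP_m$ invertible on $V_m$, and the remaining bounds are perturbation identities. The difference is in the bookkeeping. The paper derives the single resolvent-type identity
\begin{equation*}
(P_mB_XP_m)^+=(P_m-(P_mB_\dmeas P_m)^+P_mB_XP_m)(P_mB_XP_m)^+ +(P_mB_\dmeas P_m)^+
\end{equation*}
and applies it three times with a self-bounding step of the form $\|\cdot\|\leq\tfrac12\|\cdot\|+c$ to get \eqref{eq:conprop1_a2}--\eqref{eq:conprop1_a4}; in particular, for \eqref{eq:conprop1_a3} it bounds $\|(P_mB_XP_m)^+B_\dmeas\|$ via $\|(P_mB_\dmeas P_m)^+B_\dmeas\|\leq1+\ctwo$, which is where the constant $2(1+\ctwo)$ comes from, and for \eqref{eq:conprop1_a4} it splits off $(P_mB_\dmeas P_m)^+B_X(I-P_m)$ first and perturbs $B_X\to B_\dmeas$ second. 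You instead route everything through the identities $G_X^+G_X=P_m$ and $G_X^+=(G_X^+G_\dmeas)G_\dmeas^+$, which yields the slightly sharper constants $\|G_X^+G_\dmeas\|\leq2$ (not needing $\ctwo$ at all for \eqref{eq:conprop1_a3}, consistent with the fact that the operator there is $P_mB_\dmeas P_m$ rather than $B_\dmeas$) and $\|G_X^+B_X(I-P_m)\|\leq2\ctwo+1$; both of course imply the stated inequalities. The two routes are interchangeable here; yours avoids the self-bounding step and makes the role of each hypothesis slightly more transparent, while the paper's single identity is reused verbatim for all three estimates.
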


\begin{proof}
First observe that $P_m B_\dmeas P_m$ is a bijection from $V_m$ onto itself, since $\nu \in {\mathcal P}^>(t,D_1)$, and by inequality \eqref{eq:concentrationassumption} we have $\norm{P_m(B_X - B_\dmeas)P_m}\norm{(P_m B_\dmeas P_m)^+}\leq \frac 12< 1$. Therefore, by Neumann series theorem also $P_m B_X P_m = P_m(B_X - B_\dmeas)P_m + P_m B_\dmeas P_m$ is bijection on $V_m$ and we have 
\begin{equation*}
	(P_m B_X P_m)^+ P_m B_X P_m = P_m B_X P_m (P_m B_X P_m)^+ = P_m.
\end{equation*}

For inequality \eqref{eq:conprop1_a1} we notice that
\begin{equation}
	\label{eq:aux3}
	\norm{P_m - (P_m B_\dmeas P_m)^+ P_m B_X P_m}_\HS 
	\leq \norm{(P_m B_\dmeas P_m)^+} \norm{P_m (B_\dmeas - B_X) P_m}_\HS \leq \frac 12,
\end{equation}
by our assumptions, where we also applied~\eqref{eq:aux2}.

For remaining inequalities, we find by some algebraic manipulation that
\begin{eqnarray}
	(P_m B_X P_m)^+ & = & (P_m B_X P_m)^+ - (P_m B_\dmeas P_m)^+ + (P_m B_\dmeas P_m)^+ \nonumber \\
	& = & (P_m - (P_m B_\dmeas P_m)^+ P_m B_X P_m) (P_m B_X P_m)^+ + (P_m B_\dmeas P_m)^+. \label{eq:con_aux1}
\end{eqnarray}
First, by triangle inequality we obtain
\begin{eqnarray*}
\norm{(P_mB_XP_m)^\dag} & \leq & \|P_m-(P_mB_\dmeas P_m)^\dag P_mB_XP_m\|\|(P_mB_XP_m)^\dag\|+\|(P_mB_\dmeas P_m)^\dag\| \\
& \leq & \|(P_mB_\dmeas P_m)^\dag\|\|P_m (B_\dmeas - B_X) P_m\|_\HS\|(P_mB_XP_m)^\dag\|+\|(P_mB_\dmeas P_m)^\dag\|.
\end{eqnarray*}
and since $\|(P_mB_\dmeas P_m)^\dag\| = 1/\lambda_m$, it follows that
\begin{eqnarray*}
	\norm{(P_m B_X P_m)^\dag} \leq \frac 12 \norm{(P_m B_X P_m)^\dag} + \frac 1{\lambda_{\min}}.
\end{eqnarray*}
This yields inequality \eqref{eq:conprop1_a2}.

Second, we multiply the identity \eqref{eq:con_aux1} from right by $B_\dmeas$ and apply triangle inequality together with a norm bounds so that
\begin{multline*}
	\norm{(P_m B_X P_m)^+ B_\dmeas} \\
	\leq \norm{P_m - (P_m B_\dmeas P_m)^+ P_m B_X P_m}_\HS \norm{(P_m B_X P_m)^+ B_\dmeas} + \norm{(P_m B_\dmeas P_m)^+ B_\dmeas} \\
	\leq \frac 12 \norm{(P_m B_X P_m)^+ B_\dmeas} + \norm{(P_m B_\dmeas P_m)^+ B_\dmeas},
\end{multline*}
where we applied the inequality \eqref{eq:conprop1_a1}. Now inequality \eqref{eq:conprop1_a3} follows by noting that 
\begin{equation*}
	\norm{(P_m B_\dmeas P_m)^+ B_\dmeas} \leq 1 + \ctwo
\end{equation*}
due to identity \eqref{eq:aux1} and our assumption $\dmeas \in {\mathcal P}^\times(\ctwo)$.

In the same vein, multiplying identity \eqref{eq:con_aux1} from right by $B_X (I-P_m)$ yields
\begin{align*}
	\|(P_m B_X & P_m)^+  B_X (I-P_m)\| \\
	& \leq \norm{P_m - (P_m B_\dmeas P_m)^+ P_m B_X P_m}_\HS \norm{(P_m B_X P_m)^+ B_X (I-P_m)} \\ & \quad \quad + \norm{(P_m B_\dmeas P_m)^+ B_X (I-P_m)} \\
	& \leq \frac 12 \norm{(P_m B_X P_m)^+ B_X (I-P_m)} + \norm{(P_m B_\dmeas P_m)^+ B_X (I-P_m)},
\end{align*}
and, consequently,
\begin{equation*}
	\norm{(P_m B_X P_m)^+ B_X (I-P_m)} \leq 2 \norm{(P_m B_\dmeas P_m)^+ B_X (I-P_m)}.
\end{equation*}
Now we have
\begin{multline*}
	\norm{(P_m B_\dmeas P_m)^+ B_X (I-P_m)} \\
	\leq \norm{(P_m B_\dmeas P_m)^+ B_\dmeas (I-P_m)}
	+ \norm{(P_m B_\dmeas P_m)^+} \norm{B_X-B_\dmeas}_\HS
	\leq \ctwo + 2,
\end{multline*}
which concludes the proof.
\end{proof}
%

\begin{theorem}
\label{thm:appr}
Suppose that $\dmeas \in {\mathcal P}^>(t,D_1) \cap {\mathcal P}^\times(\ctwo)$ and $f^\dag \in \Theta(s,R_0)$. 
For any $m\in \N$, let $0< \eta < 1$ satisfy
\begin{equation*}
	\log\left(\frac 8\eta\right) \leq \frac {\sqrt N}{12} \lambda_m.
\end{equation*}
Then it holds that
\begin{equation}
	\label{eq:I1_bound}
	\norm{I_1}_{\mathcal H} \leq CR_0m^{-s}
\end{equation}
for $C = 2\ctwo+5$ holds with probability greater than $1-\eta/4$, where $I_1$ is given in equation \eqref{eq:idforI1I2}.
\end{theorem}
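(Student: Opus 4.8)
The plan is to exploit the orthogonal splitting $f^\dagger = P_m f^\dagger + (I-P_m)f^\dagger$ and to show that the estimator reproduces the $V_m$-part of $f^\dagger$ exactly, so that the approximation error $I_1$ depends only on the tail $(I-P_m)f^\dagger$, which the source condition $f^\dagger \in \Theta(s,R_0)$ controls by $R_0(m+1)^{-s}$.

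Step 1 (pass to a good event): I would apply Lemma \ref{lem:concentration_prob} with $\eta$ replaced by $\eta/4$. Since $\log(2/(\eta/4)) = \log(8/\eta)$, the hypothesis of the present theorem is precisely the hypothesis \eqref{eq:firstetabound} of the lemma, so the concentration bound \eqref{eq:concentrationassumption} holds with probability at least $1-\eta/4$. On this event all conclusions of Proposition \ref{prop:inversebounds} are in force; in particular $P_m B_X P_m$ is a bijection of $V_m$ onto itself with $(P_m B_X P_m)^+ P_m B_X P_m = P_m$, and $\norm{(P_m B_X P_m)^+ B_X (I-P_m)} \le 2\ctwo + 4$ by \eqref{eq:conprop1_a4}.

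Step 2 (algebraic identity): Using \eqref{eq:aux1} to insert $P_m$, one has $(P_m B_X P_m)^+ B_X P_m = (P_m B_X P_m)^+ (P_m B_X P_m) = P_m$ as operators on ${\mathcal H}$ (on the good event), hence
\begin{equation*}
I_1 = \big((P_m B_X P_m)^+ B_X - I\big) f^\dagger = (P_m B_X P_m)^+ B_X (I-P_m) f^\dagger - (I-P_m) f^\dagger .
\end{equation*}
Since $I - P_m$ is idempotent, the first term equals $\big[(P_m B_X P_m)^+ B_X (I-P_m)\big](I-P_m)f^\dagger$, so the triangle inequality together with \eqref{eq:conprop1_a4} gives $\norm{I_1}_{\mathcal H} \le (2\ctwo + 4 + 1)\,\norm{(I-P_m)f^\dagger}_{\mathcal H}$. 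Step 3 (source condition): from $f^\dagger \in \Theta(s,R_0)$ we get $\norm{(I-P_m)f^\dagger}_{\mathcal H} \le R_0(m+1)^{-s} \le R_0 m^{-s}$, which yields \eqref{eq:I1_bound} with $C = 2\ctwo + 5$.

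I do not anticipate a genuine obstacle: the analytic substance is entirely in Proposition \ref{prop:inversebounds}, which is already proved, and the remainder is the exact-reproduction identity $(P_m B_X P_m)^+ B_X P_m = P_m$ followed by a two-line estimate. The one point requiring care is the probability bookkeeping — matching the $\eta/4$ that appears in the conclusion to the $\log(8/\eta)$ inside the hypothesis when invoking the lemma (the factor $4$ leaves room for the later splitting of the total failure probability between the event used for $I_1$ and the events needed to control the variance term $I_2$).
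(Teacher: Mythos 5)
Your proposal is correct and follows essentially the same route as the paper: pass to the event $\norm{B_X - B_\dmeas}_{\HS}\leq \tfrac12\lambda_m$ via Lemma \ref{lem:concentration_prob} with $\eta/4$, use $(P_m B_X P_m)^+ B_X P_m = P_m$ to reduce $I_1$ to $(P_m B_X P_m)^+ B_X (I-P_m)f^\dagger - (I-P_m)f^\dagger$, and apply \eqref{eq:conprop1_a4} together with the source condition. Your explicit appeal to the idempotency of $I-P_m$ makes precise a step the paper leaves implicit, but the argument is the same.
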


\begin{proof}
Let us first observe that
\begin{equation*}
	I_1 = \left(\left(P_m B_X P_m\right)^+ B_X - P_m \right)f^\dagger + (P_m - I)f^\dagger.
\end{equation*}
When the bound \eqref{eq:concentrationassumption} holds, we have that
\begin{eqnarray*}
	\left(\left(P_m B_X P_m\right)^+ B_X - P_m \right)f^\dagger
	& = & \left(P_m B_X P_m\right)^+ (B_X - P_m B_X P_m) f^\dagger \\
	 & = & \left(P_m B_X P_m\right)^+ B_X\left(I- P_m\right)f^\dagger,
\end{eqnarray*}
where we used identities \eqref{eq:aux1} and~\eqref{eq:aux3}.
Now combining inequality \eqref{eq:conprop1_a4} in proposition \ref{prop:inversebounds} with the source condition implies
\begin{equation*}
	\norm{I_1}_{{\mathcal H}} \leq \left(\norm{\left(P_m B_X P_m\right)^+ B_X\left(I- P_m\right)}+1\right) \norm{\left(I- P_m\right)f^\dagger}_{{\mathcal H}} \leq (2\ctwo + 5) R_0 m^{-s}
\end{equation*}
with the given probability.
\end{proof}

In what follows, we abuse notation by denoting the square root of the pseudoinverse $(P_m B_X P_m)^{+}$ by $(P_m B_X P_m)^{-\frac 12}$ for convenience.

\begin{theorem}
\label{thm:variance}
Suppose $\dmeas \in {\mathcal P}^>(t,\cone) \cap {\mathcal P}^\times(\ctwo)$. Let $0< \eta < 1$ satisfy
\begin{equation}
	\label{eq:var_eta_req}
	\log\left(\frac 8\eta\right) \leq \frac {\sqrt N}{12} \lambda_m.
\end{equation}
There exists a constant $C$ dependent on $\cone$ and $\ctwo$ such that 
\begin{equation*}
	\norm{I_2}_{\mathcal H} \leq C\delta \log\left(\frac 8\eta\right)\left(\frac{m^t}{N} + \frac{m^{\frac{t+1}2}}{\sqrt N}\right),
\end{equation*}
with probability greater than $1-\frac 34 \eta$, where $\delta>0$ is the noise level.
\end{theorem}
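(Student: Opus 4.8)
My plan is to bound the variance term $I_2=\delta(P_mB_XP_m)^+(S_XA)^*\be$ by replacing the empirical normal operator inside the pseudoinverse with its population counterpart $B_\dmeas$ and then invoking the Pinelis inequality of Theorem~\ref{thm:inf_concentration}. I work throughout on the event $E$ on which \eqref{eq:concentrationassumption} holds; by Lemma~\ref{lem:concentration_prob} together with the hypothesis \eqref{eq:var_eta_req} we have $\Prob(E)\ge 1-\eta/4$, and on $E$ both Proposition~\ref{prop:inversebounds} and the algebraic identity \eqref{eq:con_aux1} are available. Applying \eqref{eq:con_aux1} to the vector $\delta(S_XA)^*\be\in{\mathcal H}$ and writing $G=P_m-(P_mB_\dmeas P_m)^+P_mB_XP_m$, one gets $I_2=Z_1+G\,I_2$ with $Z_1:=\delta(P_mB_\dmeas P_m)^+(S_XA)^*\be$. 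Since $\norm{G}\le\norm{G}_{\HS}\le\tfrac12$ by \eqref{eq:conprop1_a1}, the operator $I-G$ is boundedly invertible, so that $\norm{I_2}_{\mathcal H}\le 2\norm{Z_1}_{\mathcal H}$. This reduces the whole estimate to $Z_1$, which no longer contains $B_X$ in the pseudoinverse.

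Next I would recognise $Z_1$ as a normalised i.i.d.\ sum: using $(S_XA)^*\be=\frac1N\sum_{n=1}^N\epsilon_nA^*k(x_n,\cdot)$ we have $Z_1=\frac1N\sum_{n=1}^N\xi(\omega_n)$ with $\omega_n=(x_n,\epsilon_n)$ i.i.d.\ and $\xi(x,\epsilon)=\delta\,\epsilon\,(P_mB_\dmeas P_m)^+A^*k(x,\cdot)$; independence of $\epsilon_n$ and $x_n$ gives $\E\xi=0$. To verify hypothesis \eqref{eq:concentration} I would combine three ingredients: the operator-norm identity $\norm{(P_mB_\dmeas P_m)^+}=1/\lambda_m$ together with $\norm{A^*k(x,\cdot)}_{\mathcal H}\le1$, so that $\xi$ is bounded by $\delta/\lambda_m$ for each fixed $x$; the second-moment identity $\E_x\norm{(P_mB_\dmeas P_m)^+A^*k(x,\cdot)}_{\mathcal H}^2=\tr\bigl((P_mB_\dmeas P_m)^+B_\dmeas(P_mB_\dmeas P_m)^+\bigr)=\tr\bigl((P_mB_\dmeas P_m)^+\bigr)\le m/\lambda_m$, which follows from $\E_x[(A^*k(x,\cdot))\otimes(A^*k(x,\cdot))]=B_\dmeas$ and the pseudoinverse identities \eqref{eq:aux1}--\eqref{eq:aux2}; and the Gaussian moment bound $\E|\epsilon|^p\le\tfrac12 p!$ for $p\ge2$ (the remark following Theorem~\ref{thm:inf_concentration}). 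Putting these together, \eqref{eq:concentration} holds with $\sigma^2=\delta^2\tr((P_mB_\dmeas P_m)^+)\le\delta^2 m/\lambda_m$ and $L=\delta/\lambda_m$, so Theorem~\ref{thm:inf_concentration} with failure probability $\eta/4$ yields, with probability at least $1-\eta/4$,
\[
\norm{Z_1}_{\mathcal H}\le 2\log\!\left(\frac8\eta\right)\left(\frac{\delta}{\lambda_m N}+\frac{\delta\sqrt m}{\sqrt{\lambda_m}\,\sqrt N}\right).
\]

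Finally, invoking $\dmeas\in{\mathcal P}^{>}(t,\cone)$, i.e.\ $\lambda_m\ge\cone m^{-t}$, I replace $1/\lambda_m$ by $m^t/\cone$ in both terms, turning them into $\delta m^t/(\cone N)$ and $\delta m^{(t+1)/2}/(\sqrt{\cone}\sqrt N)$; combined with $\norm{I_2}_{\mathcal H}\le2\norm{Z_1}_{\mathcal H}$ this gives the claimed bound with $C=4/\min(\cone,\sqrt{\cone})$ (here only part \eqref{eq:conprop1_a1} of Proposition~\ref{prop:inversebounds} and $\dmeas\in{\mathcal P}^{>}$ are used, so the constant in fact depends only on $\cone$). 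A union bound over the two exceptional events, $E^c$ and the failure of the Pinelis estimate, costs at most $\eta/4+\eta/4=\eta/2\le\tfrac34\eta$, which is the asserted probability.

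The crux — and the reason one cannot simply concentrate $(P_mB_XP_m)^+(S_XA)^*\be$ as it stands — is that the Gaussian noise $\be$ is unbounded and, worse, the summands $\epsilon_n(P_mB_XP_m)^+A^*k(x_n,\cdot)$ are not identically distributed, since $(P_mB_XP_m)^+$ depends on the entire point set $X$. Freezing the pseudoinverse at $B_\dmeas$ and absorbing the resulting error through the Neumann-type identity \eqref{eq:con_aux1} is exactly what restores the i.i.d.\ structure required by Theorem~\ref{thm:inf_concentration}; the only genuinely delicate step is the trace estimate $\tr((P_mB_\dmeas P_m)^+)\le m/\lambda_m$, which is the origin of the factor $m^{(t+1)/2}$ (rather than $m^{t/2}$) in the variance rate.
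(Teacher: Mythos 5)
Your proof is correct and reaches the same bound, but by a genuinely different route from the paper. The paper splits $I_2$ into three factors, $\delta\,(P_mB_XP_m)^{-\frac12}\cdot(P_mB_XP_m)^{-\frac12}(P_mB_\dmeas P_m)^{\frac12}\cdot(P_mB_\dmeas P_m)^{-\frac12}A^*S_X^*\be$, bounds the first by $\sqrt{2/\lambda_m}$ via \eqref{eq:conprop1_a2}, the second by $\sqrt{2(1+\ctwo)}$ via the Cordes inequality and \eqref{eq:conprop1_a3}, and concentrates the third with Pinelis using $\tr\bigl((P_mB_\dmeas P_m)^+B_\dmeas\bigr)=m$. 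You instead apply the perturbation identity \eqref{eq:con_aux1} directly to the vector $\delta(S_XA)^*\be$ to get $\norm{I_2}\le2\norm{Z_1}$ on the event \eqref{eq:concentrationassumption}, and then run a single Pinelis argument on $Z_1=\delta(P_mB_\dmeas P_m)^+(S_XA)^*\be$ with $L=\delta/\lambda_m$, $\sigma^2=\delta^2\tr\bigl((P_mB_\dmeas P_m)^+\bigr)\le\delta^2 m/\lambda_m$; your trace identity and moment verification check out, and the union bound $\eta/4+\eta/4\le\tfrac34\eta$ matches the claimed probability. What your approach buys: it dispenses with the square-root decomposition and the Cordes inequality entirely, and it makes visible that the constant in the variance bound depends only on $\cone$ (inequality \eqref{eq:conprop1_a1} does not use $\dmeas\in{\mathcal P}^\times(\ctwo)$), a slight sharpening of the stated theorem. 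What the paper's factorization buys is a cleaner separation of the ``effective dimension'' $\tr\bigl((P_mB_\dmeas P_m)^+B_\dmeas\bigr)=m$ from the ill-posedness factor $\lambda_m^{-1/2}$, whereas in your version the two are merged inside $\sigma^2=\delta^2m/\lambda_m$. Either way the product is the same rate $m^t/N+m^{(t+1)/2}/\sqrt N$.
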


\begin{proof}
Let us decompose $I_2$ into three terms
\begin{equation*}
	I_2 = \delta \underbrace{(P_m B_X P_m)^{-\frac 12}}_{=: K_1} \cdot \underbrace{(P_m B_X P_m)^{-\frac 12} (P_m B_\dmeas P_m)^{\frac 12}}_{=: K_2} \cdot \underbrace{(P_m B_\dmeas P_m)^{-\frac 12} A^* S_X^* \be}_{=: K_3}.
\end{equation*}
Below we use the observation that for $\eta$ satisfying \eqref{eq:var_eta_req} we have by lemma \ref{lem:concentration_prob} that $\norm{B_X - B_\nu}_{HS} \leq \lambda_m/2$ with probability greater than $1-\eta/4$ and, therefore, the inequalities of proposition \ref{prop:inversebounds} will be available with the same probability.

First, under the assumption of inequality \eqref{eq:concentrationassumption}, we obtain by inequality \eqref{eq:conprop1_a2} that
\begin{equation*}
	\norm{K_1}=\norm{(P_mB_XP_m)^{-1/2}}=\sqrt{\norm{(P_mB_XP_m)^+}} \leq \sqrt{\frac{2}{\lambda_m}}
	\leq \sqrt {\frac 2{\cone}} m^{\frac t2}
\end{equation*}
with probability greater than $1-\eta/4$, where we used the assumption that $\dmeas \in {\mathcal P}^<(t,\cone)$.

For the second term we find that
\begin{equation*}
	\norm{K_2} = \norm{(P_m B_X P_m)^{-\frac 12} (P_m B_\dmeas P_m)^{\frac 12}} 
	\leq \sqrt{\norm{(P_m B_X P_m)^+ (P_m B_\dmeas P_m)}}
	\leq \sqrt{2(1+\ctwo)}
\end{equation*}
with probability greater than $1-\eta/4$, where we applied the Cordes inequality \cite[theorem IX.2.1-2]{bhatia2013matrix} and inequality \eqref{eq:conprop1_a3} from proposition \ref{prop:inversebounds}.

For the third term, let us write
\begin{equation*}
	\xi_n  = (P_m B_\dmeas P_m)^{-\frac 12} A^* S_{x_n}^* \epsilon_n.
\end{equation*}
Consequently, we have identity
\begin{equation*}
	K_3 = \frac 1N \sum_{n=1}^N \xi_n.
\end{equation*}
Notice that $\xi_n$, $n=1,...,N$, are independent ${\mathcal H}$-valued random variables. Recall that $\E \xi_n = 0$ and
\begin{eqnarray*}
	\E \norm{\xi_n}_{\mathcal H}^p & = & \E |\langle (P_m B_\dmeas P_m)^+ A^* S_{x_n}^* \epsilon_n, A^* S_{x_n}^*\epsilon_n  \rangle_{\mathcal H}|^\frac p2 \\
	& = & \E |(S_{x_n} A(P_m B_\dmeas P_m)^+ A^* S_{x_n}^* \epsilon_n)\cdot \epsilon_n  \rangle|^\frac p2 \\
	& \leq & \E |S_{x_n} A(P_m B_\dmeas P_m)^+ A^* S_{x_n}^*|^{\frac p2} \E |\epsilon_n|^p \\
	& \leq & \frac 12 p!\, \E |S_{x_n} A(P_m B_\dmeas P_m)^+ A^* S_{x_n}^*|^{\frac p2},
\end{eqnarray*}
where we identified the linear operator $S_{x_n} A(P_m B_\dmeas P_m)^+ A^* S_{x_n}^*: \R\to\R$ with a real number and utilized a rough upper bound $\E|\epsilon_n|^p \leq p!/2$.
Due to boundedness of $A$ we observe that 
\begin{equation*}
	|S_{x_n} A(P_m B_\dmeas P_m)^+ A^* S_{x_n}^*| \leq \norm{(P_m B_\dmeas P_m)^+} \leq C m^t,
\end{equation*}
where $C = D_1^{-1}$.
Moreover, abbreviating $Z_n = S_{x_n} A (P_m B_\dmeas P_m)^{-\frac 12} : {\mathcal H} \to \R$ we obtain 
\begin{equation*}
	|S_{x_n} A(P_m B_\dmeas P_m)^+ A^* S_{x_n}^*|^{\frac p2} = |Z_n Z_n^*|^{\frac p2} \leq (C m^t)^{\frac p2-1}\cdot Z_n Z_n^*
\end{equation*}
due to positivity of the operator $Z_nZ_n^*$.
In consequence, we obtain
\begin{equation*}
	\E |S_{x_n} A(P_m B_\dmeas P_m)^+ A^* S_{x_n}^*|^{\frac p2} \leq C^{\frac p2-1} m^{t(\frac p2 -1)} \tr_{\mathcal H}((P_m B_\dmeas P_m)^+ B_\dmeas) = C^{\frac p2-1}  m^{t(\frac p2 -1)+1},
\end{equation*}
since $Z_nZ_n^* = \tr_{\mathcal H}(Z_n^* Z_n)$ and $\E S_{x_n}^* S_{x_n} = \E B_{x_n} = B_\dmeas$. Moreover, we have
\begin{equation*}
	\tr_{{\mathcal H}}((P_m B_\dmeas P_m)^+ B_\dmeas) = \tr_{{\mathcal H}}(P_m) = m
\end{equation*}
due to \eqref{eq:aux2} and the cyclic property of the trace.
To sum up, we obtain
\begin{equation*}
	\E \norm{\xi_n}_{\mathcal H}^p \leq \frac 12 p!\cdot C^{\frac p2-1}  m^{t(\frac p2 -1)+1}
	=  \frac 12 p! \left(C^{\frac p4-\frac 12}  \sqrt{m}\right)^2 (m^{\frac t2})^{p-2}
\end{equation*}
for any $p\geq 2$.

Applying theorem \ref{thm:inf_concentration} yields that
\begin{equation*}
	\norm{K_3}_{\mathcal H} \leq 2\log\left(\frac 8\eta\right)\left(\frac{m^{\frac t2}}{N} + D_1^{\frac 12 - \frac p4}\sqrt{\frac{m}{N}}\right)
\end{equation*}
with probability greater than $1-\eta/4$. Combining the error estimates for $K_i$, $i=1,2,3$, we obtain the result.
\end{proof}

{\bf Proof of theorem \ref{thm:main_result_prob}.}
The result follows by combining theorems \ref{thm:appr} and \ref{thm:variance}. Namely, if we have independent events $E_1$ and $E_2$ occuring with probability greater than $1-\frac \eta 4$ and $1-\frac{3\eta}4$, respectively, then $E_1$ and $E_2$ occur simultaneously with probability
\begin{equation*}
	{\Prob}(E_1 \cap E_2) = \left(1 -\frac \eta 4\right)\left(1-\frac{3\eta}4\right) = 1 - \eta + \frac{3\eta^2}{16} \geq 1- \eta.
\end{equation*}
This concludes the proof.
\hfill\qed

\section{Expected reconstruction error}


Recall the definition of $T_R\!:{\mathcal H}\to {\mathcal H}$ given in \eqref{eq:def_trunc}
and by assumption $f^\dag \in \Theta(s, R_0)$ it also holds that $\|f^\dag\|_{{\mathcal H}}\leq R_0$. We define our nonlinear estimator according to
\begin{equation*}
	\est = T_{R}(\fmnd),
\end{equation*}
where $R$ is set below and will depend on $m$, $\delta$ and $R_0$. 

In the following, consider $(\Omega, \Prob)$ as a complete probability space describing all possible events $\omega$ of our learning problem \eqref{eq:main_model}. We factorize $\Omega$ into smaller subsets that can be individually quantified.
Let us denote 
\begin{equation*}
	\Omega_R = \{\omega\in \Omega : \|\fmnd\|_{{\mathcal H}} \leq R\}.	
\end{equation*}
Moreover, let 
\begin{equation*}
	\Omega_+:=\left\{\omega\in \Omega:\|B_X-B_{\dmeas}\|_{\rm HS}\leq \frac12\lambda_m\right\}
\end{equation*}
and $\Omega_-:=\Omega\setminus\Omega_+$. Notice that by lemma \ref{lem:concentration_prob} we have
\begin{equation}
	\label{eq:third_term}
	\Prob(\Omega_-) \leq 2\,\exp\left(-\frac{\sqrt{N}}{12}\lambda_m\right).
\end{equation}

Utilizing inequality 
\begin{equation*}
	\|f^\dag-\est\|_{{\mathcal H}}^p\leq 2^{p-1}\left(\|f^\dag\|_{{\mathcal H}}^p+\|\est\|_{{\mathcal H}}^p\right)
\end{equation*}
for any $p\geq 1$ and the trivial inequality $\|\est\|\leq R$, we can decompose the expected reconstruction error as follows
\begin{align}
\label{eq:error_decomp}
\E \|f^\dag & - \est \|_{{\mathcal H}}^p \nonumber \\
& \leq  \int_{\Omega_+\cap \Omega_R}\|f^\dag-\fmnd \|_{{\mathcal H}}^p\,\Prob({\rm d}\omega) + R_0^p\Prob\left(\Omega_+\cap \Omega_R^c\right)+ 2^{p-1}(R_0^p + R^p) \Prob(\Omega_-) 
\end{align}
where in both second and third term we utilized the truncation.
A direct estimate to the third term in error decomposition is given by \eqref{eq:third_term}.
Let us now derive estimates also for the first and second error term in \eqref{eq:error_decomp}.

\begin{proposition}
\label{prop:J1}
Consider the model ${\mathcal M}(s, R_0, {\mathcal P}^{>}(t,\cone) \cap {\mathcal P}^\times(\ctwo))$ for some constants $s,t,R_0, \cone, \ctwo>0$, where $f^\dagger \in \Theta(s,R_0)$, and suppose that $\est$ is the ML estimator defined by identity \eqref{eq:def_est}.
We have
\begin{align*}
\int_{\Omega_+\cap \Omega_R} & \|f^\dag-\fmnd \|_{{\mathcal H}}^p\,\Prob({\rm d}\omega) \\
& \leq C\left[R_0^p m^{-ps} + \delta^p \left(\frac{m^{pt}}{N^p} + \frac{ m^{\frac{p(t+1)}{2}}}{N^{\frac p2}}\right) + (R^p+R_0^p)\exp\left(-\frac{\ctwo}{12} \sqrt N m^{-t}\right)\right]
\end{align*}
for $p \leq \sqrt{N} \lambda_m/24 -1/2$, where the constant $C$ depending on $p$ and $D_j$, $j=1,2$.
\end{proposition}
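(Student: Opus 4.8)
The plan is to split the integral over $\Omega_+\cap\Omega_R$ into two regions according to whether the tail bound in Theorem~\ref{thm:main_result_prob} holds, and then integrate the resulting probabilistic estimate against the levels. On $\Omega_+$ the concentration inequality \eqref{eq:concentrationassumption} is valid, so the bounds of Proposition~\ref{prop:inversebounds} apply; moreover the integrand is deterministically controlled by $2^{p-1}(\|f^\dagger\|^p+\|\fmnd\|^p)\le 2^{p-1}(R_0^p+R^p)$ on all of $\Omega_+\cap\Omega_R$ because of the truncation. So the first thing I would do is fix $m$, set $\eta_0 = 8\exp(-\frac{\sqrt N}{12}\lambda_m)$, and note that the hypothesis $p\le \sqrt N\lambda_m/24 - 1/2$ guarantees $\eta_0<1$ and, more importantly, that the admissible range of $\eta$ in \eqref{eq:probability_interval} is nonempty and wide enough to push the tail integral below.

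Next I would invoke Theorems~\ref{thm:appr} and~\ref{thm:variance} (equivalently, the already-assembled Theorem~\ref{thm:main_result_prob}) to get, for every $\eta\in(\eta_0,1)$,
\begin{equation*}
	\Prob\!\left(\|\fmnd - f^\dagger\|_{{\mathcal H}} > C_1\!\left[R_0 m^{-s} + \log\!\left(\tfrac 8\eta\right)\delta\!\left(\tfrac{m^t}{N} + \tfrac{m^{(t+1)/2}}{\sqrt N}\right)\right]\right) \le \eta .
\end{equation*}
Writing $a = C_1 R_0 m^{-s}$ and $b = C_1\delta(m^t/N + m^{(t+1)/2}/\sqrt N)$, this says the random variable $u = \|\fmnd-f^\dagger\|_{{\mathcal H}}$ satisfies, on $\Omega_+$, a sub-exponential-type tail: $\Prob(u > a + b\log(8/\eta)) \le \eta$. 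The standard layer-cake computation then gives
\begin{equation*}
	\E\big[u^p \,\mathbf 1_{\Omega_+\cap\Omega_R}\big]
	\le \int_{\Omega_+\cap\Omega_R} u^p\,\Prob(d\omega)
	\le \big(a+b\log(8/\eta_0)\big)^p + \int_{0}^{\eta_0}\big(a + b\log(8/\eta)\big)^p\,d\eta ,
\end{equation*}
where on the residual event of probability $\le\eta_0$ I simply bound $u^p\le 2^{p-1}(R_0^p+R^p)$ using the truncation. The integral $\int_0^{\eta_0}(a+b\log(8/\eta))^p\,d\eta$ is $O(\eta_0\cdot(\text{polylog}))$ and is absorbed, up to the $p$-dependent constant, into the $(R^p+R_0^p)\exp(-\frac{\ctwo}{12}\sqrt N m^{-t})$-type term after bounding $\lambda_m\ge \ctwo m^{-t}$ from the assumption $\dmeas\in{\mathcal P}^{>}(t,\ctwo)$ — note $\eta_0 = 8\exp(-\frac{\sqrt N}{12}\lambda_m)\le 8\exp(-\frac{\ctwo}{12}\sqrt N m^{-t})$. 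Finally, expanding $(a+b\log(8/\eta_0))^p$ by the inequality $(x+y)^p\le 2^{p-1}(x^p+y^p)$, using $a^p = (C_1R_0)^p m^{-ps}$ and $b^p \le 2^{p-1}(C_1\delta)^p(m^{pt}/N^p + m^{p(t+1)/2}/N^{p/2})$, and bounding the $\log(8/\eta_0)^p$ factor, yields exactly the three claimed terms with a constant $C$ depending only on $p$ and $D_1,D_2$.

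The main obstacle I anticipate is bookkeeping the $\log$ factors: $\log(8/\eta_0)$ is of order $\sqrt N\lambda_m$, which is \emph{not} uniformly bounded, so naively the leading term would carry a spurious $(\sqrt N\lambda_m)^p$. The resolution is that this logarithmic factor only multiplies $b$, and $b\cdot\sqrt N\lambda_m$ is still of the right order once one checks that $\lambda_m\le\|B_\dmeas\|$ is bounded (indeed $\le 1$) — so $\log(8/\eta_0)\cdot b \le \mathrm{const}\cdot\sqrt N\cdot\delta\,m^{(t+1)/2}/\sqrt N = \mathrm{const}\cdot\delta\,m^{(t+1)/2}$, wait, that must instead be handled by keeping $\log(8/\eta_0)$ explicit and noting $b\log(8/\eta_0) \lesssim \delta(m^t/N + m^{(t+1)/2}/\sqrt N)\cdot \sqrt N\lambda_m \lesssim \delta(m^{t}\lambda_m/\sqrt N + m^{(t+1)/2}\lambda_m)$, which is dominated by the stated bound since $\lambda_m\le 1$. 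Thus the care needed is precisely in choosing to \emph{not} optimize over $\eta$ but rather evaluate the layer-cake bound at the fixed endpoint $\eta_0$, where the probabilistic statement is still valid and all $N$-dependence collapses to the advertised rate.
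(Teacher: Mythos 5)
Your overall strategy --- combine the tail bound of theorem \ref{thm:main_result_prob} with the almost-sure bound $\|f^\dagger-\fmnd\|_{{\mathcal H}}\leq R+R_0$ supplied by the truncation on $\Omega_R$, then integrate --- is the same as the paper's, which packages the integration step into a citation of \cite[Cor.~C.2]{blanchard2018optimal}. The genuine gap is in how you carry out that integration. You evaluate the quantile bound at the single worst admissible level $\eta_0=8\exp(-\tfrac{\sqrt N}{12}\lambda_m)$, producing the leading term $\bigl(a+b\log(8/\eta_0)\bigr)^p$. Since $\log(8/\eta_0)\asymp\sqrt N\lambda_m$, this attaches a factor of order $(\sqrt N\lambda_m)^p$ to $b^p$, and your proposed repair --- bounding $b\log(8/\eta_0)\lesssim\delta\bigl(m^t\lambda_m/\sqrt N+m^{(t+1)/2}\lambda_m\bigr)$ and claiming this is dominated by the stated bound ``since $\lambda_m\le 1$'' --- does not close it: domination of, say, $\delta m^{(t+1)/2}\lambda_m$ by $\delta m^{(t+1)/2}/\sqrt N$ would require $\lambda_m\lesssim 1/\sqrt N$, whereas the standing hypothesis of the entire argument is the opposite inequality $\sqrt N\lambda_m\geq 12\log 8$. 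As written, your variance term is too large by a factor of order $(\sqrt N\lambda_m)^p$.

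The fix is to integrate over the layers instead of plugging in the endpoint. Writing $\E X^p=\int_0^\infty pt^{p-1}\Prob(X>t)\,dt$ for $X=\mathbf 1_{\Omega_+\cap\Omega_R}\|f^\dagger-\fmnd\|_{{\mathcal H}}$ and substituting $t=a+b\tau$, the tail $\Prob(X>a+b\tau)\le 8e^{-\tau}$, valid for $\tau\le\log(8/\eta_0)$, integrates to $C_p(a^p+b^p)$ because $\int_0^\infty\tau^pe^{-\tau}\,d\tau=\Gamma(p+1)$ depends only on $p$; the remaining layer $t\in[a+b\log(8/\eta_0),R+R_0]$ has probability at most $\eta_0$ and contributes $(R+R_0)^p\eta_0$. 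This is precisely the content of \cite[Cor.~C.2]{blanchard2018optimal}, invoked in the paper together with the condition $p\le-\tfrac12\log\eta_0$, which is your $p\le\sqrt N\lambda_m/24-1/2$. The remaining ingredients of your proposal (the role of $\Omega_+$ in activating proposition \ref{prop:inversebounds}, the almost-sure bound from $\Omega_R$, and $\eta_0\le 8\exp(-\tfrac{\cone}{12}\sqrt N m^{-t})$ from $\dmeas\in{\mathcal P}^{>}(t,\cone)$) match the paper's argument.
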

\begin{proof}
Define a positive random variable
\begin{equation*}
	X = \mathbf 1_{\Omega_+\cap \Omega_R} (\omega)\|f^\dag - \fmnd\|_{{\mathcal H}}.
\end{equation*}
Next, recall from theorem \ref{thm:main_result_prob} that for $0<\eta<1$ satisfying
\begin{equation*}
	\eta \geq 8 \exp\left(-\frac{\sqrt N}{12} \lambda_{min}\right) =: \eta_0
\end{equation*}
we have
\begin{equation*}
	\Prob\left(X \geq a + \log\left(\frac 1\eta\right) b\right) \leq \Prob\left(\|f^\dag - \fmnd\|_{{\mathcal H}}\geq a + \log\left(\frac 1\eta\right) b\right) \leq \eta,
\end{equation*}
where 
\begin{equation*}
	a = C\left[R_0 m^{-s} + \log(8) \delta \left(\frac{m^t}{N} + \frac{ m^{\frac{t+1}{2}}}{\sqrt N}\right)\right]
\end{equation*}
and 
\begin{equation*}
	b = C\delta \left(\frac{ m^t}{N} + \frac{ m^{\frac{t+1}{2}}}{\sqrt N}\right)
\end{equation*}
for some constant $C>0$ depending on $D_j$, $j=1,2$. Similarly, we observe that
\begin{equation*}
	\Prob(X \geq R+R_0) = 0
\end{equation*}
due to the source condition and requirement  $\|\fmnd\|\leq R$.

Now it follows directly from \cite[Cor. C.2.]{blanchard2018optimal} that for any positive $p\leq -\frac 12 \log \eta_0$ we have
\begin{equation*}
	\E X^p \leq C_p \left(a^p + b^p + (R+R_0)^p \eta_0 \right)
\end{equation*}
with a constant $C_p$ depending on $p$. The result follows by estimates of type $(x+y)^p \leq 2^{p-1} (x^p + y^p)$ for $x,y>0$.
\end{proof}

\begin{proposition}
\label{prop:J2}
Let $\{V_m\}_{m=1}^\infty$ be a sequence of admissible subspaces and 
suppose $\dmeas \in {\mathcal P}^{>}(t,\cone) \cap {\mathcal P}^\times(\ctwo)$ and $f^\dagger \in \Theta(s,R_0)$
for some constants $s,t,R_0, \cone, \ctwo>0$. 
Moreover, let $\est$ be the ML estimator defined by identity \eqref{eq:def_est}.
For the parameter choice 
\begin{equation}
	\label{eq:R_choice}
	R = \frac{2 \sqrt 2 \delta}{\lambda_m}  + (2\ctwo + 6)R_0
\end{equation}
it holds that
\begin{equation*}
	\Prob\left(\Omega_+ \cap \Omega_R^c\right) \leq \exp\left(-\frac N8\right).
\end{equation*}
\end{proposition}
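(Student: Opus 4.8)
The plan is to work throughout on the event $\Omega_+$ and to reduce the statement to a Gaussian tail bound for the variance term $I_2$ of \eqref{eq:recon_err_id}. On $\Omega_+$ inequality \eqref{eq:concentrationassumption} holds, so all bounds of Proposition~\ref{prop:inversebounds} are available, and since the estimate in the proof of Theorem~\ref{thm:appr} uses only \eqref{eq:concentrationassumption} we also have $\norm{I_1}_{\mathcal H}\leq(2\ctwo+5)R_0m^{-s}\leq(2\ctwo+5)R_0$. Using the decomposition $\fmnd=f^\dagger+I_1+I_2$, the bound $\norm{f^\dagger}_{\mathcal H}\leq R_0$, and \eqref{eq:conprop1_a2} to estimate
\[
	\norm{I_2}_{\mathcal H}=\delta\norm{(P_mB_XP_m)^+A^*S_X^*\be}_{\mathcal H}\leq\frac{2\delta}{\lambda_m}\norm{A^*S_X^*\be}_{\mathcal H},
\]
I obtain that on $\Omega_+$ one has $\norm{\fmnd}_{\mathcal H}\leq(2\ctwo+6)R_0+\frac{2\delta}{\lambda_m}\norm{A^*S_X^*\be}_{\mathcal H}$. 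With $R$ chosen as in \eqref{eq:R_choice}, this shows that on $\Omega_+$ the event $\{\norm{A^*S_X^*\be}_{\mathcal H}\leq\sqrt2\}$ forces $\norm{\fmnd}_{\mathcal H}\leq R$, so that
\[
	\Omega_+\cap\Omega_R^c\ \subseteq\ \Omega_+\cap\bigl\{\norm{A^*S_X^*\be}_{\mathcal H}>\sqrt2\bigr\}.
\]

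It then suffices to bound $\Prob_\be\bigl(\norm{A^*S_X^*\be}_{\mathcal H}>\sqrt2\bigr)$ for an arbitrary fixed design $X$. The key observation is that, conditionally on $X$, $A^*S_X^*\be=\frac1N\sum_{n=1}^N\epsilon_nA^*k(x_n,\cdot)$ is a centered Gaussian element of $\mathcal H$ with covariance operator $\Sigma=\frac1NB_X$ (immediate from $B_X=A^*S_X^*S_XA$). Since $\norm{A}\leq1$, $\norm{k(x_n,\cdot)}_{H_k}\leq1$ and $S_X^*S_X=\frac1N\sum_n\langle k(x_n,\cdot),\cdot\rangle_{H_k}k(x_n,\cdot)$, one has $\tr_{\mathcal H}B_X\leq\tr_{H_k}(S_X^*S_X)\leq1$ and $\norm{B_X}\leq\norm{S_X^*S_X}\leq1$, hence $\tr_{\mathcal H}\Sigma\leq\frac1N$ and $\norm{\Sigma}\leq\frac1N$ for every $X$ (no appeal to $\Omega_+$ is needed here). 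The Bernstein-type Theorem~\ref{thm:inf_concentration} is too weak to produce an $e^{-N/8}$ rate in this regime; instead I would invoke a standard chi-squared tail inequality: for $G\sim\mathcal N(0,\Sigma)$ in a separable Hilbert space and any $u>0$,
\[
	\Prob\Bigl(\norm{G}_{\mathcal H}^2>\tr_{\mathcal H}\Sigma+2\sqrt{\tr_{\mathcal H}(\Sigma^2)\,u}+2\norm{\Sigma}\,u\Bigr)\leq e^{-u}.
\]

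Applying this with $\Sigma=\frac1NB_X$ and $u=N/8$, and using $\tr_{\mathcal H}(\Sigma^2)\leq\norm{\Sigma}\,\tr_{\mathcal H}\Sigma\leq N^{-2}$, the threshold on the right becomes $\frac1N+\frac1{\sqrt{2N}}+\frac14$, which is strictly less than $2$ for every $N\geq1$. Hence $\Prob_\be\bigl(\norm{A^*S_X^*\be}_{\mathcal H}^2>2\bigr)\leq e^{-N/8}$ uniformly in $X$, and integrating this bound over $X\in\Omega_+$ (using independence of $X$ and $\be$) gives $\Prob(\Omega_+\cap\Omega_R^c)\leq e^{-N/8}$, as claimed. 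The one genuinely delicate point is the numerical comparison $\frac1N+\frac1{\sqrt{2N}}+\frac14<2$, for which the constant $2\sqrt2$ in \eqref{eq:R_choice} is precisely calibrated; consequently the constants must be carried explicitly throughout rather than absorbed into a generic $C$.
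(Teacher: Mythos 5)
Your argument is correct, and the reduction is essentially the one the paper uses: on $\Omega_+$ you control $\norm{\fmnd}_{\mathcal H}$ by $(2\ctwo+6)R_0$ plus $\frac{2\delta}{\lambda_m}$ times a noise functional via Theorem~\ref{thm:appr} and inequality \eqref{eq:conprop1_a2}, so that the calibration of the constant $2\sqrt2$ in \eqref{eq:R_choice} reduces the claim to a tail bound at level $\sqrt2$ for that functional, conditionally on $X$. The only genuine divergence is the final concentration step. The paper discards the operator entirely, bounding $\norm{A^*S_X^*\be}_{\mathcal H}\leq\norm{\be}_N$ (from $\norm{A}\leq1$ and $\norm{S_{x_n}}\leq1$), and then applies a one-dimensional sub-exponential Bernstein inequality to $\sum_{j}(\epsilon_j^2-1)$ at $t=N$, which gives exactly $e^{-N/8}$. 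You instead keep $A^*S_X^*\be$ as an $\mathcal H$-valued centered Gaussian with covariance $\frac1N B_X$ (correctly computed) and invoke a Laurent--Massart-type chi-squared tail in Hilbert space, using $\tr_{\mathcal H}B_X\leq1$ and $\norm{B_X}\leq1$; the numerical check $\frac1N+\frac1{\sqrt{2N}}+\frac14<2$ is right for all $N\geq1$. Both routes land on the same constant $e^{-N/8}$; yours is marginally sharper in principle because it retains the covariance structure rather than the crude bound by $\norm{\be}_N$, but it requires importing a vector-valued concentration inequality not stated in the paper, whereas the paper's version stays with an elementary scalar Bernstein bound. Your conditioning-then-integrating step over $X\in\Omega_+$ is also handled correctly, since $\Omega_+$ is measurable with respect to $X$ alone and $\be$ is independent of $X$.
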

\begin{proof}
We observe first that
\begin{eqnarray*}
\Prob\left(\Omega_+ \cap \Omega_R^c\right)
& \leq & \Prob\left(\Omega_+ \cap \{\|\fmnd-f^\dag\|+\|f^\dag\|>R\}\right) \\
& \leq & \Prob\left(\Omega_+ \cap \{\|\fmnd-f^\dag\|>R-R_0\}\right),
\end{eqnarray*}
where we made use of $\|f^\dag\|_{{\mathcal H}}\leq R_0$.

For $\omega \in \Omega_+$, we notice by identity \eqref{eq:recon_err_id} and proposition \ref{thm:appr} that
\begin{equation*}
	\norm{\fmnd - f^\dag}_{{\mathcal H}} \leq (2\ctwo+5)R_0 m^{-s} + I_2,
\end{equation*}
and, consequently,
\begin{eqnarray*}
	\Prob\left(\Omega_+ \cap \Omega_R^c\right) 
	& \leq & \Prob\left(\Omega_+ \cap \{\|I_2\|_{{\mathcal H}}>R-(2\ctwo+6)m^{-s})R_0\}\right) \\ 
	& \leq & \Prob\left(\Omega_+ \cap \left\{\|I_2\|_{{\mathcal H}}>\frac{2 \sqrt 2\delta}{\lambda_m} \right\}\right)
\end{eqnarray*}
by the choice of $R$ in equation \eqref{eq:R_choice}.
Note that in $\Omega_+$, we have that
\begin{equation*}
	\|I_2\|\leq \delta\|(P_mB_XP_m)^{-1}\|\cdot\|(S_XA)^*\be\|\leq  \frac{2 \delta}{\lambda_m} \cdot \|\be\|_N.
\end{equation*}
Then, it follows that
\begin{eqnarray*}
	\Prob\left(\Omega_+ \cap \Omega_R^c\right) & \leq & 
	\Prob\left\{ \frac{2 \delta}{\lambda_m} \cdot \|\be\|_N > \frac{2 \sqrt 2 \delta}{\lambda_m} \right\} \\
	&\leq & \Prob\left\{\norm{\be}_N > \sqrt{2}\right\} \\
   & = & \Prob\left\{ \frac 1N \sum_{j=1}^N (\epsilon_j^2 - \E \epsilon_j^2) > 1\right\},
\end{eqnarray*}
where we applied identity $\E \epsilon_j^2 = 1$.
Indeed, random variables $\epsilon_j^2$ have subexponential distribution, more precisely, $\epsilon_j^2\sim {\rm SE}(\nu^2,\alpha)$ with $\nu=2$, $\alpha=4$. It follows that
$$
\sum_{j=1}^N(\epsilon_j^2-\E\epsilon_j^2)\sim {\rm SE}(4N,4),
$$
see e.g. \cite{vershynin2018high}.
By Bernstein's inequality, a random variable $X\sim{\rm SE}(\nu^2,\alpha)$ satisfies the one-sided tail bound
$$
\Prob(X-\E X>t)\leq \begin{cases}\exp\left(-\frac{t^2}{2\nu^2}\right)&\text{if}~0\leq t\leq \frac{\nu^2}{\alpha},\\
\exp\left(-\frac{t}{2\alpha}\right)&\text{if}~t>\frac{\nu^2}{\alpha}.\end{cases}
$$
By considering $X= \sum_{j=1}^N(\epsilon_j^2-\E\epsilon_j^2)$, we observe that $\nu^2/\alpha = N$ and taking $t=N$, we conclude that
\begin{equation*}
	\Prob\left(\Omega_+ \cap \Omega_R^c\right) \leq \exp\left(-\frac N8\right).
\end{equation*}
which completes the proof.

%

\end{proof}

Combining propositions \ref{prop:J1} and \ref{prop:J2} with inequality \eqref{eq:error_decomp}, we can state the following upper bound to the reconstruction error

\begin{corollary}
\label{cor:exp_upper_bound}
Let $\{V_m\}_{m=1}^\infty$ be a sequence of admissible subspaces and 
suppose $\dmeas \in {\mathcal P}^{>}(t,\cone) \cap {\mathcal P}^\times(\ctwo)$ and $f^\dagger \in \Theta(s,R_0)$
for some constants $s,t,R_0, \cone, \ctwo>0$. 
Moreover, let $\est$ be the ML estimator defined by identity \eqref{eq:def_est} with $R=R(m,\delta)$ given by \eqref{eq:R_choice}.
We have that
\begin{multline}
	\label{eq:Lp_upper_bound}
	\E \|f^\dag - \est \|_{{\mathcal H}}^p 
	\lesssim R_0^p m^{-ps} + \frac{\delta^p m^{pt}}{N^p} + \frac{\delta^p m^{\frac{p(t+1)}2}}{N^{\frac p2}} \\
	+ (\delta^p m^{pt} + R_0^p)\left[\exp\left(-\ctwo \sqrt N m^{-t}\right) + \exp\left(-\frac N 8\right)\right],
\end{multline}
for $p\leq \sqrt{N}\lambda_m/24 - 1/2$, where the inequality is up to a constant depending on $p$ and $D_j$, $j=1,2$.
\end{corollary}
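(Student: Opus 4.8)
The plan is to consolidate the three estimates just assembled for the pieces of the error decomposition \eqref{eq:error_decomp}. First I would feed Proposition \ref{prop:J1} into the integral over $\Omega_+\cap\Omega_R$; this is legitimate precisely on the range $p\leq \sqrt N\lambda_m/24-1/2$ and produces the three principal contributions $R_0^p m^{-ps}$, $\delta^p m^{pt} N^{-p}$ and $\delta^p m^{p(t+1)/2} N^{-p/2}$, plus a residual of the shape $(R^p+R_0^p)\exp(-c\sqrt N m^{-t})$. Next I would bound the middle term of \eqref{eq:error_decomp} by Proposition \ref{prop:J2}, giving $R_0^p\,\Prob(\Omega_+\cap\Omega_R^c)\leq R_0^p e^{-N/8}$, and the last term by \eqref{eq:third_term}, so that $2^{p-1}(R_0^p+R^p)\Prob(\Omega_-)\lesssim (R_0^p+R^p)\exp(-\tfrac{\sqrt N}{12}\lambda_m)$.

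The second step is to rewrite everything in terms of $m$, $N$, $\delta$, $R_0$ alone, i.e.\ to eliminate $R$. Since $R$ is fixed by \eqref{eq:R_choice}, a crude splitting $(x+y)^p\leq 2^{p-1}(x^p+y^p)$ gives $R^p\lesssim (\delta/\lambda_m)^p+R_0^p$, and the hypothesis $\dmeas\in{\mathcal P}^{>}(t,\cone)$, that is $\lambda_m\geq \cone m^{-t}$, turns this into $R^p\lesssim \delta^p m^{pt}+R_0^p$. The same lower bound converts each surviving factor $\exp(-\tfrac{\sqrt N}{12}\lambda_m)$ into $\exp(-\tfrac{\cone}{12}\sqrt N m^{-t})$, which I would identify with the generic exponential rate $\exp(-\ctwo\sqrt N m^{-t})$ of \eqref{eq:Lp_upper_bound} after relabelling constants. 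Collecting the three principal terms and grouping all exponentially small remainders under the common prefactor $\delta^p m^{pt}+R_0^p$ then yields \eqref{eq:Lp_upper_bound}, with implied constant depending on $p$ and $D_j$, $j=1,2$.

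I do not expect any genuine obstacle: the analytic heavy lifting --- the tail bound of Theorem \ref{thm:main_result_prob} behind Proposition \ref{prop:J1}, and the sub-exponential concentration of $\|\be\|_N$ behind Proposition \ref{prop:J2} --- is already done, and what remains is bookkeeping. The two places that call for care are: making sure the admissible range of $p$ imported from Proposition \ref{prop:J1} (equivalently the requirement $\eta_0<1$ there) is honoured throughout; and checking that the truncation level $R$, which itself scales like $\delta/\lambda_m$, is correctly reabsorbed through the ${\mathcal P}^{>}$ lower bound so that no hidden $R$-dependence survives in the final estimate.
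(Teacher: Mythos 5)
Your proposal is correct and follows exactly the route the paper intends: the corollary is obtained by inserting Proposition \ref{prop:J1}, Proposition \ref{prop:J2} and the bound \eqref{eq:third_term} into the decomposition \eqref{eq:error_decomp}, and then eliminating $R$ via \eqref{eq:R_choice} together with $\lambda_m\geq \cone m^{-t}$. Your two flagged care points (the admissible range of $p$ inherited from Proposition \ref{prop:J1}, and the reabsorption of $R^p\lesssim \delta^p m^{pt}+R_0^p$) are precisely the only nontrivial bookkeeping steps.
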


Now we are ready to prove our main result.

{\bf Proof of theorem \ref{thm:main_result_exp}.}
Let us consider the upper bound obtained in corollary \ref{cor:exp_upper_bound} and 
focus on the sum 
\begin{equation}
	L(m) = R_0^pm^{-ps} + \frac{\delta^p m^{\frac{p(t+1)}2}}{N^{\frac p2}}.
\end{equation}
An ansatz $m\propto \delta^{\nu_1} R_0^{\nu_2} N^{\nu_3}$ with variables $\nu_j \in \R$, $j=1,2,3$ minimizes $L(m)$ with parameters
\begin{equation}
	\label{eq:params_nu}
	\nu_1 = -\frac{2}{2s+t+1}, \quad  \nu_2 = \frac{2}{2s+t+1}\quad \text{and} \quad \nu_3 = \frac{1}{2s+t+1}
\end{equation}
and induces a bound
\begin{equation}
	\label{eq:L1bound}
	L(m) \lesssim R_0^p \left(\frac{\delta}{R_0\sqrt N}\right)^{\frac{2ps}{2s+t+1}} =: a_{N, R_0, \delta}^p.
\end{equation}
We will next confirm that applying the ansatz with values in \eqref{eq:params_nu} to the other terms in the upper bound of \eqref{eq:Lp_upper_bound} yields slower rates of convergence.

First, applying the ansatz to the second term, we observe that 
\begin{equation*}
	\frac{\delta^pm^{pt}}{N^p} \leq C_{\delta, R_0} N^{-\frac{(2s+1)p}{2s+t+1}}
\end{equation*}

Second, when $s,t>0$ satisfy $2s-t+1>0$, we have that $1/2-t\nu_3>0$ and therefore $\sqrt N m^{-t}$ grows polynomially. In consequence, the last terms on the right-hand side of inequality \eqref{eq:Lp_upper_bound} decay exponentially w.r.t. $N$, and we find that $a_{N, R_0, \delta}^p$ dominates the expectation up to a constant with parameter choice rule indicated by \eqref{eq:params_nu} asymptotically w.r.t $N$.
This yields the result.
\hfill\qed

\section{Minimax optimality}

\subsection{Preliminaries}

In this section we follow the main steps of strategy devised in \cite{blanchard2018optimal} for proving minimax optimality. We note that here the proof is in some parts simplified since our source condition is not dependent on the normal operator $B_\dmeas$ and, therefore, allowing more explicit arguments directly deriving the strong minimax optimality.

Let us construct the necessary set of tools for the proof. For the moment, consider a general model ${\mathcal P}' = \{P_\theta \; | \; \theta \in \Theta\}$ of probability measures on a measurable space $(Z, {\mathcal A})$. Further, let $d: \Theta \times \Theta \to [0,\infty)$ be a metric.

Now suppose $P_1, P_2 \in {\mathcal P}'$ and recall the definition of Kullback--Leibler divergence between $P_1$ and $P_2$ given by
\begin{equation*}
	D_{KL}(P_1, P_2) = \int \log \left(\frac{dP_1}{dP_2}\right) dP_1,
\end{equation*}
if $P_1$ is absolutely continuous w.r.t. $P_2$.
For a $n$-fold tensors, i.e. $P_j^{\otimes N} = P_j \otimes ... \otimes P_j$ on $Z_n = Z \otimes ... \otimes Z$, we have
\begin{equation*}
	D_{KL}(P_1^{\otimes N}, P_2^{\otimes N}) = N D_{KL}(P_1, P_2).
\end{equation*}

Let us briefly describe the procedure to prove (strong) minimax optimality below. For any $N\in \N$ large enough we aim to find $\epsilon = \epsilon(N)$ with the following properties:
we can find $K_\epsilon = K_{\epsilon(N)}$ parameters $\theta_1, ..., \theta_{K_\epsilon} \in \Theta$ such that $\theta_i$ and $\theta_j$, $i\neq j$ are $\epsilon$-separated with respect to the associated distance while the (data-generating) distributions $P_{\theta_i} = P_i \in {\mathcal P}'$ have small mutual Kullback--Leibler divergence. 
We utilize the following fundamental lower bound
\begin{equation}
	\label{eq:aux_minimax1}
	\inf_{\hat \theta} \sup_{P_\theta \in {\mathcal P}'} \left(\E_\theta d(\theta, \hat \theta)^p\right)^{\frac 1p}
	\geq \epsilon \inf_{\hat \theta} \sup_{P_\theta \in {\mathcal P}'} 
	\Prob \left(d(\theta, \hat\theta) \geq \epsilon\right)
	\geq \epsilon \inf_{\hat \theta} \sup_{1\leq j \leq K_\epsilon} \Prob_j\left(d(\theta, \hat\theta) \geq \epsilon \right).
\end{equation}
To prove that $a_{N, R_0, \delta}$ satisfies the claim in theorem \ref{thm:minimax_optimality}, we show that $\epsilon = \epsilon(N)$ can be chosen so that
\begin{equation}
	\label{eq:aux_minimax2}
	\frac{\epsilon(N)}{a_{N, R_0, \delta}}>C \quad \text{and} \quad \inf_{\hat \theta} \sup_{1\leq j \leq K_\epsilon}  \Prob_j\left(d(\hat \theta, \theta) \geq \epsilon \right) > C
\end{equation}
hold simultaneously for some positive constant independent of $N$. 
This result will provide the strong minimax lower bound described in theorem \ref{thm:minimax_optimality} while the upper bound is obtained in theorem \ref{thm:main_result_exp}.

To begin with, let us record the main auxiliary results that will be utilized.

\begin{proposition}[{\cite[Prop. 6.1]{blanchard2018optimal}}]
\label{prop:minimax_blan}
Assume that $K\geq 2$ and suppose $\Theta$ contains $K+1$ elements $\theta_0, ..., \theta_K$ such that
\begin{itemize}
\item[(i)] For some $\epsilon>0$ and for any $0\leq i < j\leq K$ we have $d(\theta_i, \theta_j) \geq 2\epsilon$,
\item[(ii)] For any $j=1, ..., K$, $P_j$ is absolutely continuous with respect to $P_0$ and
\begin{equation*}
	\frac 1K \sum_{j=1}^K D_{KL} (P_j, P_0) \leq \omega \log K
\end{equation*}
for some $0<\omega<1/8$. 
\end{itemize}
Then it follows that
\begin{equation*}
	\epsilon \inf_{\hat \theta} \sup_{1\leq j \leq K} P_j(d(\hat \theta, \theta_j) \geq \epsilon) \geq
	\frac{\sqrt{K}}{1+\sqrt K} \left(1-2\omega - \sqrt{\frac{2\omega}{\log K}}\right) > 0
\end{equation*}
\end{proposition}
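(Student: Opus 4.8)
The statement is the standard Tsybakov/Fano reduction scheme underlying minimax lower bounds, so the plan is to prove it in two stages: first pass from the estimation problem to a multiple hypothesis testing problem over the $K+1$ candidates $\theta_0,\dots,\theta_K$, and then lower-bound the minimax testing error through the likelihood ratios $L_j = dP_j/dP_0$ and the average Kullback--Leibler constraint in (ii). The factor $\frac{\sqrt K}{1+\sqrt K}$ and the bracketed term will come from two genuinely different ingredients, and isolating them cleanly is the organizing idea.

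First I would carry out the reduction to testing. Given any estimator $\hat\theta$, define the minimum-distance test $\psi = \argmin_{0\le i\le K} d(\hat\theta,\theta_i)$, breaking ties arbitrarily. Suppose the data are generated under $P_j$ and $\psi = i \neq j$. Then $d(\hat\theta,\theta_i)\le d(\hat\theta,\theta_j)$ by definition of the minimizer, while the separation hypothesis (i) gives $d(\theta_i,\theta_j)\ge 2\epsilon$; combining this with the triangle inequality $d(\theta_i,\theta_j)\le d(\theta_i,\hat\theta)+d(\hat\theta,\theta_j)\le 2\,d(\hat\theta,\theta_j)$ forces $d(\hat\theta,\theta_j)\ge\epsilon$. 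Hence $\{\psi\neq j\}\subseteq\{d(\hat\theta,\theta_j)\ge\epsilon\}$ for every $j$, so that $P_j(d(\hat\theta,\theta_j)\ge\epsilon)\ge P_j(\psi\neq j)$. Taking the maximum over $0\le j\le K$ and the infimum over $\hat\theta$ shows that it suffices to bound the minimax testing error $\inf_\psi \max_{0\le j\le K} P_j(\psi\neq j)$ from below; including the reference point $\theta_0$ (which also lies in $\Theta$) in the maximum is precisely what will yield the sharp constant $\frac{\sqrt K}{1+\sqrt K}$.

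Second I would bound the minimax testing error. Fix a test $\psi$ and write $A_j=\{\psi=j\}$, so $\{A_0,\dots,A_K\}$ partitions the sample space. For a threshold $\tau>1$ I split each correct-classification probability along the likelihood ratio,
\begin{equation*}
P_j(A_j)=\int_{A_j\cap\{L_j\le\tau\}} L_j\, dP_0 + \int_{A_j\cap\{L_j>\tau\}} L_j\, dP_0 \le \tau P_0(A_j) + P_j(L_j>\tau),
\end{equation*}
and sum over $j=1,\dots,K$. Writing $p^* = \max_{0\le j\le K} P_j(A_j^c)$ and using $\sum_{j=1}^K P_0(A_j)=P_0(A_0^c)\le p^*$ together with $\sum_{j=1}^K P_j(A_j)\ge K(1-p^*)$, I obtain the self-consistent inequality
\begin{equation*}
K(1-p^*) \le \tau\, p^* + \sum_{j=1}^K P_j(L_j>\tau).
\end{equation*}
Choosing $\tau=\sqrt K$ and rearranging isolates the desired prefactor:
\begin{equation*}
p^* \ge \frac{K-\sum_{j=1}^K P_j(L_j>\sqrt K)}{K+\sqrt K} = \frac{\sqrt K}{1+\sqrt K}\left(1-\frac1K\sum_{j=1}^K P_j\bigl(\log L_j>\tfrac12\log K\bigr)\right).
\end{equation*}
It then remains to prove $\tfrac1K\sum_{j=1}^K P_j(\log L_j>\tfrac12\log K)\le 2\omega+\sqrt{2\omega/\log K}$, after which the reduction of the first step completes the argument; the strict positivity of the bracket follows from $K\ge 2$ and $0<\omega<1/8$.

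The main obstacle is exactly this last tail estimate, and it is the only non-routine step. I must control $\tfrac1K\sum_{j}P_j(\log L_j>\tfrac12\log K)$ using only the first-moment information $\tfrac1K\sum_j D_{KL}(P_j,P_0)=\tfrac1K\sum_j \E_{P_j}[\log L_j]\le\omega\log K$ supplied by (ii). Heuristically, Markov's inequality against the mean would give $\tfrac{2}{\log K}\cdot\tfrac1K\sum_j D_{KL}(P_j,P_0)\le 2\omega$, which accounts for the $2\omega$ term; the difficulty is that $\log L_j$ is not nonnegative, so this step is not literally valid. The rigorous route is to control the negative part of $\log L_j$ under $P_j$ — equivalently, to invoke a truncation/Pinsker-type estimate relating the overshoot of the threshold to $\sqrt{D_{KL}}$ — and this accounts for the lower-order fluctuation correction of size $\sqrt{2\omega/\log K}$. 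Once this tail bound is established, the whole chain collapses to the claimed inequality, and I would close by noting that the construction places all $K+1$ points $\theta_0,\dots,\theta_K$ in $\Theta$, so the testing bound transfers to the minimax estimation risk as asserted.
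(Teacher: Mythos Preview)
The paper does not give a proof of this proposition; it is recorded as an auxiliary result with a citation to \cite[Prop.~6.1]{blanchard2018optimal}, which in turn is essentially Tsybakov's Theorem~2.5 on multiple-hypothesis lower bounds. Your outline is precisely that standard argument: the reduction to testing via the minimum-distance decoder, the likelihood-ratio split $P_j(A_j)\le \tau P_0(A_j)+P_j(L_j>\tau)$ summed over $j$ with the choice $\tau=\sqrt K$, and finally the tail estimate on $\tfrac1K\sum_j P_j(\log L_j>\tfrac12\log K)$. There is thus nothing in the present paper to compare against, and your plan is faithful to the original source.

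One small point worth tightening: your self-consistent inequality uses $\sum_{j=1}^K P_0(A_j)=P_0(A_0^c)\le p^*$, which requires the maximum defining $p^*$ to run over $0\le j\le K$, including the reference index. The proposition as printed here takes the supremum only over $1\le j\le K$; this is a minor slip in the statement (and indeed the later application in the proof of Theorem~\ref{thm:minimax_optimality} maximizes over all $K_\epsilon$ points, reference included). You already handle this correctly by explicitly including $\theta_0$ in the maximum during the reduction step, so nothing is missing---just be aware of the mismatch with the displayed index range.
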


In addition, the following lemma is key in constructing the required elements $\theta_1, ..., \theta_{K_\epsilon}$.

\begin{lemma}[{\cite[Prop. 6]{caponnetto2007optimal}}]
\label{lem:opt_pis}
For any $k\geq 28$ there exists an integer $N_k>3$ and $\pi_1, ..., \pi_{N_k} \in \{-1, +1\}^k$ such that
for any $i,j\in \{1, ..., N_k\}$ with $i\neq j$ it holds
\begin{equation*}
	\log(N_k - 1) > \frac k{36} > \frac 23
\end{equation*}
and 
\begin{equation*}
	\sum_{\ell=1}^k (\pi_i^\ell - \pi_j^\ell)^2 \geq k,
\end{equation*}
where $\pi_i = (\pi_i^1, ..., \pi_i^k)$.
\end{lemma}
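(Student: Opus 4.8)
The plan is to recognize the cited statement as a Varshamov--Gilbert packing bound and to produce the required sign patterns by the probabilistic method followed by an expurgation step. The first move is a reduction. Since each coordinate satisfies $\pi_i^\ell \in \{-1,+1\}$, one has $(\pi_i^\ell - \pi_j^\ell)^2 = 4\,\mathbf 1[\pi_i^\ell \neq \pi_j^\ell]$, so that $\sum_{\ell=1}^k (\pi_i^\ell-\pi_j^\ell)^2 = 4\, d_H(\pi_i,\pi_j)$, where $d_H$ is the Hamming distance. Thus the distance requirement $\sum_\ell (\pi_i^\ell-\pi_j^\ell)^2 \geq k$ is exactly the packing condition $d_H(\pi_i,\pi_j) \geq k/4$, and the task reduces to exhibiting a family in $\{-1,+1\}^k$ that is pairwise at Hamming distance at least $k/4$, of cardinality $N_k$ large enough that $\log(N_k-1) > k/36$.

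First I would draw $\sigma_1,\dots,\sigma_M \in \{-1,+1\}^k$ independently and uniformly, with $M$ fixed later. For a fixed pair $i \neq j$ the coordinatewise disagreements are independent $\mathrm{Bernoulli}(1/2)$ events, so $d_H(\sigma_i,\sigma_j)$ is $\mathrm{Binomial}(k,1/2)$ with mean $k/2$. Calling a pair \emph{bad} when $d_H(\sigma_i,\sigma_j) < k/4$ and applying Hoeffding's inequality to the deviation of $k/4$ below the mean gives $\Prob(\text{pair } (i,j)\ \text{bad}) \leq \exp(-k/8)$. By linearity the expected number of bad pairs is at most $\binom{M}{2}\exp(-k/8) \leq \tfrac12 M^2 \exp(-k/8)$.

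Next comes the expurgation. Choosing $M = \lfloor \exp(k/8)\rfloor$ makes the expected number of bad pairs at most $M/2$, so some realization has at most $M/2$ bad pairs; deleting one endpoint from each leaves a subfamily of size at least $M/2$ whose members are pairwise at Hamming distance $\geq k/4$. Relabelling these as $\pi_1,\dots,\pi_{N_k}$ with $N_k \geq M/2$ yields the packing, and multiplying the distance back by $4$ recovers $\sum_\ell(\pi_i^\ell-\pi_j^\ell)^2 \geq k$. It then remains to check the cardinality bounds: since $N_k \geq \tfrac12(\exp(k/8)-1)$, for $k \geq 28$ one has $\log(N_k-1) \geq k/8 - \log 2 > k/36$, while $k/36 \geq 28/36 > 2/3$ and $N_k > 3$ follow by direct substitution.

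I do not expect a genuine obstacle here; the only real care needed is bookkeeping. The generous gap between the $\exp(k/8)$ produced by the concentration step and the comparatively weak target $\exp(k/36)$ means the loss of a factor of two in expurgation and the shift from $N_k$ to $N_k-1$ are harmless, but one should verify the boundary case $k=28$ explicitly rather than only asymptotically. A deterministic alternative is the greedy Gilbert--Varshamov construction: build the family one vector at a time, noting that each chosen $\pi$ forbids only the Hamming ball of radius $k/4-1$ around it, whose volume $\sum_{i<k/4}\binom{k}{i}$ is at most $2^{H(1/4)k}$ with binary entropy $H(1/4) < 0.82$; this leaves at least $2^{(1-H(1/4))k}$ admissible vectors, whose logarithm again comfortably exceeds $k/36$.
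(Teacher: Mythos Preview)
The paper does not supply its own proof of this lemma: it is quoted verbatim as \cite[Prop.~6]{caponnetto2007optimal} and used as a black box in Proposition~\ref{prop:minimax_main_prop}. There is therefore nothing in the paper to compare against beyond the statement itself.

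Your argument is correct. The reduction $\sum_\ell(\pi_i^\ell-\pi_j^\ell)^2 = 4\,d_H(\pi_i,\pi_j)$ is exact, and the probabilistic construction with Hoeffding plus expurgation is the standard Varshamov--Gilbert mechanism. The only point to tidy is the final arithmetic: from $N_k \geq \tfrac12(\exp(k/8)-1)$ you get $N_k-1 \geq \tfrac12(\exp(k/8)-3)$, which for $k=28$ gives $N_k-1 \geq 15$ and $\log(N_k-1) \geq \log 15 \approx 2.71$, comfortably above $28/36 \approx 0.78$; your stated bound $\log(N_k-1)\geq k/8-\log 2$ is a slight overclaim at the boundary but the target $k/36$ is reached with a large margin, exactly as you anticipated. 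The deterministic Gilbert--Varshamov alternative you sketch is also valid and yields the same conclusion.
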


\subsection{Proof of strong minimax optimality}

Let us now turn to the concrete problem at hand. 
We consider as $d : {\mathcal H} \times {\mathcal H}\to \R$ the metric induced by norm, i.e., $d(f_1, f_2) = \norm{f_1 - f_2}_{{\mathcal H}}$. For $\nu \in {\mathcal P}^<(\cthree,t)$ we associate the following joint measure
\begin{equation}
	\label{eq:joint_measure}
	\jmeas_f(dx, dy) = \likemeas_f(dy | x) \nu(dx) \quad\text{on}\; \dom\times \R,
\end{equation}
where $\likemeas_f(dy | x) = {\mathcal N}(S_x f, \delta^2)$. We observe that if $f \in \Theta(s,R)$, then $\jmeas_f \in {\mathcal M}(s,R,{\mathcal P}^<(\cthree,t))$. Moreover, by \cite[Prop. 6.2]{blanchard2018optimal} for $\jmeas_{f_1}, \jmeas_{f_2} \in {\mathcal M}(s,R,{\mathcal P}^<(\cthree,t))$ we have
\begin{equation*}
	D_{KL}(\jmeas_{f_1}, \jmeas_{f_2}) = \frac{1}{2\delta^2} \norm{B_\dmeas^{\frac 12}(f_1 - f_2)}_{{\mathcal H}}^2.
\end{equation*}

We assume that $\{e_j\}_{j=1}^\infty \subset {\mathcal H}$ form an orthonormal basis with the property that $e_m \in V_m$ for any $m\in \N$ and $\{e_j\}_{j=1}^m$ forms an orthonormal basis for the subspace $V_m$. 
Such basis can be constructed e.g. by the Gram--Schmidt orthonormalization procedure.

%

\begin{proposition}
\label{prop:minimax_main_prop}
Assume that $\dmeas \in {\mathcal P}^<(t,\cthree)$ and let $s,R>0$. For any $0< \epsilon \leq \epsilon_0$ with $\epsilon_0 = 56^{-s} R$ there exists $K_\epsilon \in \N$ and functions $f_1, ..., f_{K_\epsilon} \in {\mathcal H}$ satisfying following three conditions:
\begin{itemize}
	\item[(i)] It holds that $f_i \in \Theta(s,R)$ and
	\begin{equation*}
		\norm{f_i -f_j}_{{\mathcal H}} > \epsilon
	\end{equation*}
	for any $i,j=1, ..., K_\epsilon$ with $i\neq j$.
	\item[(ii)] Let $\jmeas_i = \jmeas_{f_i}$ be given by \eqref{eq:joint_measure}. Then it holds
	\begin{equation*}
		D_{KL}(\jmeas_i, \jmeas_j) \leq C R^2 \delta^{-2} \left(\frac \epsilon R\right)^{2 + \frac ts}
	\end{equation*}
	for any $i,j = 1, ..., K_\epsilon$ with $i\neq j$, where the constant $C$ is dependent on $t$.
	\item[(iii)] It holds that $\log(K_\epsilon - 1) \geq C \left(\frac R\epsilon\right)^{\frac 1{s}}$.
\end{itemize}
\end{proposition}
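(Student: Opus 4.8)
The plan is to run the hypercube/Fano lower-bound construction of \cite{caponnetto2007optimal, blanchard2018optimal}, adapted to the subspace source condition $\Theta(s,R)$. Fix $0<\epsilon\le\epsilon_0$ and set $M:=\lceil(R/\epsilon)^{1/s}\rceil-1$ and $k:=\lceil M/2\rceil$; the restriction $\epsilon\le 56^{-s}R$ is precisely what guarantees $(R/\epsilon)^{1/s}\ge 56$, hence $k\ge 28$, so that Lemma~\ref{lem:opt_pis} applies and yields $\pi_1,\dots,\pi_{N_k}\in\{-1,+1\}^k$ with $\log(N_k-1)>k/36$ and $\sum_{\ell=1}^k(\pi_i^\ell-\pi_j^\ell)^2\ge k$ for $i\ne j$. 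I would then fix an orthonormal system $w_1,\dots,w_k\in V_M$ (chosen as explained below), set $\beta:=RM^{-s}/\sqrt k$, put $K_\epsilon:=N_k$, and define
\[
 f_i:=\beta\sum_{\ell=1}^k\pi_i^\ell w_\ell,\qquad i=1,\dots,K_\epsilon .
\]

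Conditions (i) and (iii) follow from the shape of the $f_i$. Since $f_i\in V_M$, one has $(I-P_{m''})f_i=0$ for $m''\ge M$, while for $m''<M$ the monotonicity $(m''+1)^{-s}\ge M^{-s}$ gives $\norm{(I-P_{m''})f_i}_{\mathcal H}\le\norm{f_i}_{\mathcal H}=\beta\sqrt k=RM^{-s}\le R(m''+1)^{-s}$; hence $f_i\in\Theta(s,R)$. For separation, Lemma~\ref{lem:opt_pis} and $M<(R/\epsilon)^{1/s}$ give $\norm{f_i-f_j}_{\mathcal H}^2=\beta^2\sum_{\ell=1}^k(\pi_i^\ell-\pi_j^\ell)^2\ge\beta^2 k=R^2M^{-2s}>\epsilon^2$. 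Finally $\log(K_\epsilon-1)=\log(N_k-1)>k/36\ge C(R/\epsilon)^{1/s}$, which is (iii).

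For (ii) I would invoke the identity $D_{KL}(\jmeas_i,\jmeas_j)=\tfrac1{2\delta^2}\norm{B_\dmeas^{1/2}(f_i-f_j)}_{\mathcal H}^2=\tfrac1{2\delta^2}\langle B_\dmeas(f_i-f_j),f_i-f_j\rangle_{\mathcal H}$ recalled before the statement. Because $f_i-f_j\in W:=\mathrm{span}\{w_1,\dots,w_k\}\subseteq V_M$, this is at most $\tfrac1{2\delta^2}\,\Lambda_W\,\norm{f_i-f_j}_{\mathcal H}^2$ with $\Lambda_W:=\sup_{w\in W\setminus\{0\}}\langle B_\dmeas w,w\rangle_{\mathcal H}/\norm{w}_{\mathcal H}^2$; combined with $\norm{f_i-f_j}_{\mathcal H}^2\le 4\beta^2k=4R^2M^{-2s}$ and $M\asymp(R/\epsilon)^{1/s}$ this gives $D_{KL}(\jmeas_i,\jmeas_j)\lesssim\delta^{-2}R^2M^{-2s}\Lambda_W$. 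Thus the whole argument reduces to choosing $W$ so that $\Lambda_W\lesssim M^{-t}$, since then $D_{KL}(\jmeas_i,\jmeas_j)\lesssim\delta^{-2}R^2M^{-2s-t}\asymp R^2\delta^{-2}(\epsilon/R)^{2+t/s}$, exactly as claimed. This is where $\dmeas\in{\mathcal P}^<(t,\cthree)$ enters, and it is the main obstacle: ${\mathcal P}^<$ only asserts that the \emph{single} smallest eigenvalue $\lambda_{min}(P_mB_\dmeas P_m)$ is $\le\cthree m^{-t}$ at each scale $m$, whereas what is needed is a subspace of $V_M$ of dimension $k\asymp M$ on which $B_\dmeas$ has norm of order $M^{-t}$, i.e.\ that a whole block of roughly $M/2$ of the eigenvalues of the compression $P_MB_\dmeas P_M$ be of that size. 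The natural candidate is to take $w_1,\dots,w_k$ to be eigenvectors of $P_MB_\dmeas P_M$ belonging to its $k$ smallest eigenvalues: the identity $\langle B_\dmeas w,w\rangle_{\mathcal H}=\langle P_MB_\dmeas P_M\,w,w\rangle_{\mathcal H}$ for $w\in V_M$ then forces $\Lambda_W$ to equal the $k$-th smallest eigenvalue, and the crux is to show, exploiting that $\lambda_{min}(P_mB_\dmeas P_m)\le\cthree m^{-t}$ holds \emph{simultaneously for all} $m\le M$ together with the interlacing relations between the nested compressions $P_mB_\dmeas P_m$, that this eigenvalue is $\lesssim M^{-t}$; an equivalent route is to assemble $W$ from the $\lambda_{min}$-eigenvectors across a range of scales $m\in[cM,M]$ and to orthogonalise them while keeping the $B_\dmeas$-quadratic form controlled. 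Either way, obtaining the \emph{sharp} factor $M^{-t}$ (rather than the crude $M^{1-t}$ that a naive Cauchy--Schwarz estimate produces) is the delicate quantitative step; once it is in place the remaining estimates are bookkeeping, with all implicit constants depending only on $\cthree,s,t$, and Theorem~\ref{thm:minimax_optimality} then follows by feeding Proposition~\ref{prop:minimax_main_prop} into Proposition~\ref{prop:minimax_blan} along the lines of \eqref{eq:aux_minimax1}--\eqref{eq:aux_minimax2} with $\epsilon=\epsilon(N)\asymp a_{N,R_0,\delta}$.
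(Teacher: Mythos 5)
Your construction and your verification of (i) and (iii) match the paper's in substance: the paper takes $k=\tfrac12(R/\epsilon)^{1/s}$, places the hypercube perturbations in the shell spanned by $e_{k+1},\dots,e_{2k}$ of the orthonormal basis adapted to the filtration $\{V_m\}$, normalizes so that $\norm{f_i}_{\mathcal H}=\epsilon$, and then checks the source condition and the $\epsilon$-separation exactly as you do (your $V_M$-placement with $\beta=RM^{-s}/\sqrt{k}$ yields the same conclusions). The genuine gap is (ii), which you explicitly leave open, and it is not mere bookkeeping. Your reduction requires a subspace $W\subset V_M$ of dimension $k\asymp M$ with $\sup_{w\in W\setminus\{0\}}\langle B_\dmeas w,w\rangle_{\mathcal H}/\norm{w}_{\mathcal H}^2\lesssim M^{-t}$; by Courant--Fischer the best achievable value of this quantity over all $k$-dimensional $W\subset V_M$ is the $k$-th \emph{smallest} eigenvalue of $P_MB_\dmeas P_M$, so you are asking that roughly half the spectrum of that compression lie below $CM^{-t}$. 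The hypothesis $\dmeas\in{\mathcal P}^<(t,\cthree)$ constrains only the single smallest eigenvalue at each scale; nothing prevents the remaining $m-1$ eigenvalues of $P_mB_\dmeas P_m$ from being of order, say, $m^{-t/2}$ at every scale, in which case $\lambda_k(P_MB_\dmeas P_M)\gg M^{-t}$ and no admissible $W$ exists. The interlacing relations you invoke point the wrong way (they give $\lambda_{1+r}(P_MB_\dmeas P_M)\ge\lambda_{min}(P_{M-r}B_\dmeas P_{M-r})$, a lower bound), so neither of your two suggested routes closes the argument.

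The paper sidesteps this by never passing through the spectrum of a single compression: it expands $\norm{B_\dmeas^{1/2}(f_i-f_j)}_{\mathcal H}^2$ coordinatewise in the increment directions $e_\ell\in V_\ell\ominus V_{\ell-1}$ and uses the diagonal decay $\langle e_\ell,B_\dmeas e_\ell\rangle_{\mathcal H}\le C\ell^{-t}$ for $\ell\in\{k+1,\dots,2k\}$, which after summation produces exactly the factor $k^{-t}\asymp(\epsilon/R)^{t/s}$ in (ii). That coordinatewise estimate (together with the vanishing of the cross terms $\langle e_\ell,B_\dmeas e_{\ell'}\rangle_{\mathcal H}$ in the expansion) is the single ingredient your argument is missing; it is a property of $B_\dmeas$ in the adapted basis that goes beyond the literal statement of ${\mathcal P}^<$ and holds, for instance, in the spectral situation of Remark~\ref{rem:comparison_with_blan} where $V_m$ is spanned by eigenvectors of $B_\dmeas$. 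To complete your proof you would need to either adopt that diagonal estimate, as the paper does, or prove the block-spectral bound you reduced to under an appropriately strengthened hypothesis.
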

\begin{proof}
We define
\begin{equation}
	k = \frac 12 \left(\frac R \epsilon\right)^{\frac 1{s}} 
\end{equation}
and observe that with $\epsilon\leq \epsilon_0$ we have $k\geq 28$ satisfying condition of lemma \ref{lem:opt_pis}. In consequence, let $K_\epsilon = N_{k(\epsilon)}>3$ and $\pi_1, ..., \pi_{K_\epsilon} \in \{-1, +1\}^k$ be given by the same lemma
and define
\begin{equation*}
	f_i = \frac{\epsilon}{\sqrt k} \sum_{\ell=k+1}^{2k} \pi_i^{(\ell-k)} e_\ell
\end{equation*}
for $i=1, ..., K_\epsilon$.
We observe that $f_i \in \Omega(s,R)$ since one can write
$f_i =  \sum_{\ell=k+1}^{\infty} \ell^{-s} \langle f_i,  \ell^s e_\ell\rangle_{{\mathcal H}} e_\ell$
and show that
\begin{eqnarray*}
	\norm{(I-P_m)f_i}_{{\mathcal H}}^2 & = & \sum_{\ell=\max\{k+1,m+1\}}^{\infty} \ell^{-2s} \langle f_i,  \ell^s e_\ell\rangle_{{\mathcal H}}^2 \\
	& \leq & (m+1)^{-2s} \sum_{\ell=\max\{k+1,m+1\}}^{\infty} \langle f_i,  \ell^s e_\ell\rangle_{{\mathcal H}}^2 \\
	& \leq & (m+1)^{-2s}\frac{\epsilon^2}k \sum_{\ell= k+1}^{2k} \ell^{2s} \leq (2k)^{2s}\epsilon^2 (m+1)^{-2s} = R^2 (m+1)^{-2s}.
\end{eqnarray*}
Moreover, we have
\begin{equation*}
	\norm{f_i - f_j}_{{\mathcal H}}^2 = \sum_{\ell=k+1}^{2k} \ell^{-2s} |\langle g_i-g_j, e_\ell\rangle_{{\mathcal H}}|^2 = \frac{\epsilon^2}k \sum_{\ell=k+1}^{2k} |\pi_i^{(\ell-k)} - \pi_j^{(\ell-k)}|^2\geq \epsilon^2,
\end{equation*}
which completes the proof for claim (i).

Consider now claim (ii). We have that
\begin{eqnarray*}
	D_{KL}(\jmeas_i, \jmeas_j) & = & \frac 1{2\delta^2} \norm{B_\dmeas^{\frac 12} (f_i - f_j)}_{{\mathcal H}}^2 \\
	& = & \frac{\epsilon^2}{2\delta^2 k} \sum_{\ell=k+1}^{2k} |\pi_i^{(\ell-k)} - \pi_j^{(\ell-k)}|^2 \langle e_\ell, B_\dmeas e_\ell\rangle_{{\mathcal H}} \\
	& \leq & \frac {C\epsilon^2}{2\delta^2 k}  \sum_{\ell=k+1}^{2k} \ell^{-t} |\pi_i^{(\ell-k)} - \pi_j^{(\ell-k)}|^2 \\
	& \leq & \frac{2 C \epsilon^2}{\delta^2} \cdot k^{-t} \\
	& = & \frac{2 C \epsilon^2}{\delta^2} \cdot 2^{t} \left(\frac{\epsilon}R\right)^{\frac{t}{s}}
	= 2^{t+1}C \delta^{-2} R^2 \left(\frac{\epsilon}R\right)^{\frac{t}{s}+2}
\end{eqnarray*}

For the claim (iii), it remains to observe that
\begin{equation}
	\log(K_\epsilon - 1) \geq \frac k{36} = \frac 1{72} \left(\frac R \epsilon\right)^{\frac 1s},
\end{equation}
which concludes the proof.
\end{proof}

Now we are ready to prove the minimax optimality.\\

{\bf Proof of theorem \ref{thm:minimax_optimality}.} Let us now fix parameters $s,t,R,\cone, \ctwo,\cthree>0$
and consider model ${\mathcal M}(\Theta', {\mathcal P}')$, where
\begin{equation*}
	 {\mathcal P}' = \Big\{\dmeas \in {\mathcal P} \;  \Big| \; \dmeas \in {\mathcal P}^{>}(t,\cone) \cap {\mathcal P}^\times(\ctwo) \cap {\mathcal P}^{<}(t, \cthree)\Big\} 
\end{equation*}
and $\Theta' = \Theta(s,R_0)$ given by \eqref{eq:source} parametrized by admissible subspaces $V_m \subset {\mathcal H}$, $m\geq 1$. By theorem \ref{thm:main_result_exp} we know that 
\begin{equation*}
	a_{N, R_0, \delta} = R_0 \left(\frac{\delta}{R_0\sqrt N}\right)^{\frac{2s}{2s+t+1}}.
\end{equation*}
yields an upper rate of convergence in $L^p$. It remains to show that $a_{N, R_0, \delta}$ is also a strong minimax lower rate of convergence. 

To this end, we construct a rule $N = N(\epsilon)$ (which we will invert below to obtain $\epsilon = \epsilon(N)$) for $\epsilon$ small enough that yields the lower bound in the spirit of \eqref{eq:aux_minimax1} and \eqref{eq:aux_minimax2}. For $\epsilon \leq 56^{-s}R$ let $K_\epsilon$ and $f_1, ..., f_{K_\epsilon} \in \Omega(s,R) \subset \Theta(s,R)$ be given by proposition \ref{prop:minimax_main_prop}. Let us then consider the conditions of proposition \ref{prop:minimax_blan}. Clearly, the condition (i) is satisfied due to first result in proposition \ref{prop:minimax_main_prop}. For the second condition we have
\begin{eqnarray*}
	\frac{1}{K_\epsilon-1} \sum_{j=1}^{K_\epsilon-1} D_{KL}(\jmeas_j^{\otimes N}, \jmeas_{K_\epsilon}^{\otimes N})
	& = & \frac{N}{K_\epsilon -1}\sum_{j=1}^{K_\epsilon-1} D_{KL}(\jmeas_j, \jmeas_{K_\epsilon}) \\
	& \leq & N C_t R^2 \delta^{-2} \left(\frac{\epsilon}{R}\right)^{2+\frac ts} \\
	& \leq & N C_t R^2 \delta^{-2} \left(\frac{\epsilon}{R}\right)^{2+\frac{t+1}s}  \log(K_\epsilon-1)\\
	& =: & \omega \log(K_\epsilon-1).
\end{eqnarray*}
By setting
\begin{equation}
	\label{eq:minimax_n(eps)}
	N(\epsilon) = \left\lfloor \left(8C_t R^2 \delta^{-2} \left(\frac{\epsilon}{R}\right)^{2+\frac{t+1}s}\right)^{-1} \right\rfloor
\end{equation}
ensures $\omega \leq 1/8$ and condition (ii).

Now we obtain by proposition \ref{prop:minimax_blan} that
\begin{eqnarray*}
	\inf_{\hat f} \max_{1\leq j \leq K_\epsilon} \jmeas_j^{\otimes N}	\left(\norm{\hat f - f_j}_{{\mathcal H}} \geq \frac \epsilon 2\right) \geq \frac{\sqrt{K_\epsilon-1}}{1 + \sqrt{K_\epsilon-1}} \left(1 - 2\omega - \sqrt{\frac{2\omega}{\log(K_\epsilon-1)}} \right) \geq C > 0,
\end{eqnarray*}
where the constant $C$ is independent of $\epsilon$.

Finally, inverting identity \eqref{eq:minimax_n(eps)} translates into a bound
\begin{equation*}
	\epsilon = \epsilon(N) \geq \tilde C_t R \left(\frac{\delta\sqrt N}{R}\right)^{\frac{2s}{2s+t+1}},
\end{equation*}
which is aligned with the rate $a_{N,R_0,\delta}$ implying $\epsilon(N)/a_{N,R_0,\delta}$ is bounded away from zero and, in conclusion, we have
\begin{equation*}
	\liminf_{n\to\infty} \inf_{\hat f} \sup_{\jmeas \in {\mathcal M}'} \frac{ \left(\E_{\bdmeas_N} \norm{\hat f - f^\dagger}_{{\mathcal H}}^p\right)^{\frac 1p}}{a_{N, R_0, \delta}} > 0.
\end{equation*}
This completes the proof.
\hfill\qed

\section{Conclusions}

In this work we have studied statistical inverse learning for a regularization strategy obtained by projecting the unknown to finite-dimensional subspaces. We have demonstrated that our nonlinear estimator, which is constructed as a norm cut-off of a linear maximum likelihood estimator, achieves minimax optimal convergence rates. Indeed, in the particular example of truncated singular value decomposition, our rate coincides with the known minimax optimal rate.

Projection methods are often motivated by iterative schemes, where the number of iteration steps identifies the dimension of a subspace to which the unknown is projected. Here, we require that the subspace structure $\{V_m\}$ is fixed. It remains future work to study interesting iterative methods for which conditions such as imposed by the set ${\mathcal P}^{</>}$ are satisfied with high probability; e.g. if the structure $\{V_m\}$ is dependent on the observational data $\by$, is data-driven or a spectral basis is approximated by a power method. Moreover, future work naturally includes extending our results to nonlinear inverse problems. 


\bibliographystyle{plain}
\bibliography{references}

\end{document}